\numberwithin{equation}{section}
\DeclareMathOperator{\Span}{span}
\DeclareMathOperator{\supt}{supp}
\newcommand{\ko}{\mathcal K_0^n}
\newcommand{\ke}{\mathcal K_e^n}
\newcommand{\rn}{\mathbb R^n}
\newcommand{\sn}{ {S^{n-1}}}
\newtheorem*{problem}{Problem}
\newtheorem{lemma}{Lemma}[section] 
\newtheorem{theorem}[lemma]{Theorem}
\title{The $L_p$ Aleksandrov problem for origin-symmetric polytopes} 
\author[Y. Zhao]{Yiming Zhao}
\address{Department of Math and Computer Science,  St. John's University, 8000 Utopia Parkway, Queens, NY 11439}
\email{yiming.zhao.math@gmail.com}
\keywords{$L_p$ Aleksandrov integral curvature, $L_p$ Aleksandrov problem, polytope, Monge-Amp\`{e}re equation}
\subjclass[2010]{52A40,52A38}
\begin{document} 
\maketitle 

\begin{abstract}
The $L_p$ Aleksandrov integral curvature and its corresponding characterization problem---the $L_p$ Aleksandrov problem---were recently introduced by Huang, Lutwak, Yang, and Zhang. The current work presents a solution to the $L_p$ Aleksandrov problem for origin-symmetric polytopes when $-1<p<0$.
% \PACS{PACS code1 \and PACS code2 \and more}
\end{abstract}

\section{Introduction}

The classical Aleksandrov problem is the counterpart of the Minkowski problem---a fundamental problem in the Brunn-Minkowski theory whose influence reaches many fields of mathematics including convex geometry, differential geometry, PDE, and functional analysis. The Aleksandrov problem is the measure characterization problem for Aleksandrov integral curvature $J(K,\cdot)$ (also known as integral Gauss curvature), the most studied curvature measure which was defined by Aleksandrov \cite{MR0007625}. When the convex body $K$ is sufficiently smooth, the Aleksandrov integral curvature of $K$ (when viewed as a measure on $\partial K$) has the Gauss curvature as its density. 

The Aleksandrov problem was completely solved by Aleksandrov himself using a topological argument, see Aleksandrov \cite{MR0007625}. Alternative approaches that connect the Aleksandrov problem to optimal mass transport were given by Oliker \cite{MR2332603} and more recently by Bertrand \cite{MR3523119}. 

The last three decades saw the rapid and flourish development of the $L_p$ Brunn-Minkowski theory that was initiated by Firey but only truly gained life when Lutwak \cite{MR1231704,MR1378681} began to systematically work on it in the early 1990s. The $L_p$ Brunn-Minkowski theory is arguably the most vibrant theory in modern convex geometry and has been the breeding ground for many important results. 
The $L_p$ Minkowski problem is the fundamental problem in the $L_p$ Brunn-Minkowski theory and characterizes the $L_p$ surface area measure that sits in the center of the theory. In particular, the discovery of an important class of affine isoperimetric inequalities---the sharp affine $L_p$ Sobolev inequality---owes to the solution of the $L_p$ Minkowski problem for $p\geq 1$, see \cite{MR1987375}. This effort has over the years inspired many more sharp affine isoperimetric inequalities, see \cite{MR1863023, MR1987375, MR2530600}.

The corresponding measure characterization problem (geometric measure, resp.) for the Aleksandrov problem (Aleksandrov integral curvature, resp.) in the $L_p$ Brunn-Minkowski theory had been long sought-for. In a recent groundbreaking work \cite{HLYZ}, Huang, Lutwak, Yang \& Zhang (Huang-LYZ) discovered that Aleksandrov integral curvature naturally arises as the ``differential'' of a certain entropy integral. Following their work, they introduced the $L_p$ Aleksandrov integral curvature in \cite{HLYZ2} and posed the measure characterization problem called the $L_p$ Aleksandrov problem. More details will follow shortly.

The purpose of the current work is to solve the $L_p$ Aleksandrov problem when $-1<p<0$ in the case of origin-symmetric polytopes.

We shall provide some background on the $L_p$ Brunn-Minkowski theory. 

The $L_p$ surface area measure $S_p(K,\cdot)$ is the fundamental geometric measure in the $L_p$ Brunn-Minkowski theory. In fact, many key concepts in the $L_p$ theory including the $L_p$ mixed volume and the $L_p$ affine surface area can be defined solely using the $L_p$ surface area measure.

The $L_p$ Minkowski problem asks: given a Borel measure $\mu$ on $S^{n-1}$, what are necessary and sufficient conditions on $\mu$ so that there exists a convex body $K$ such that $\mu$ is exactly the $L_p$ surface area measure of $K$? When $p=1$, the $L_p$ Minkowski problem is the same as the classical Minkowski problem which was solved by Minkowski, Fenchel \& Jessen, Aleksandrov, etc. Regularity results on the Minkowski problem include the influential paper \cite{MR0423267} by Cheng \& Yau. The solution, when $p>1$, was given by Lutwak \cite{MR1231704} when $\mu$ is an even measure and Chou \& Wang \cite{MR2254308} for arbitrary $\mu$. See also Chen \cite{MR2204749}, Chen, Li \& Zhu \cite{MR3680945}, Huang, Liu \& Xu \cite{MR3366857}, Hug-LYZ \cite{MR2132298}, Jian, Lu \& Wang \cite{MR3366854}, Lutwak \& Oliker \cite{MR1316557}, LYZ \cite{MR2067123}, , and Zhu \cite{MR3352764,guangxianindiana}. 

The $L_p$ Minkowski problem contains two major unsolved cases. 

When $p=-n$, the $L_{-n}$ surface area measure $S_{-n}(K,\cdot)$ is also known as the centro-affine surface area measure whose density in the smooth case is the centro-affine Gauss curvature. The characterization problem, in this case, is the centro-affine Minkowski problem posed in Chou \& Wang \cite{MR2254308}. See also Jian, Lu \& Zhu \cite{MR3479715}, Lu \& Wang \cite{MR2997361}, Zhu \cite{MR3356071}, etc., on this problem.

When $p=0$, the $L_0$ surface area measure $S_{0}(K,\cdot)$ is the cone volume measure whose total measure is the volume of $K$. Cone volume measure is the only one among all $L_p$ surface area measures that is SL$(n)$ invariant. It is still being intensively studied, see, for example, Barthe, Gu\'{e}don, Mendelson \& Naor \cite{MR2123199}, B\"{o}r\"{o}czky \& Henk \cite{MR3415694}, B\"{o}r\"{o}czky-LYZ \cite{MR2964630,BLYZ,MR3316972}, Henk \& Linke \cite{MR3148545}, Ludwig \cite{MR2652209}, Ludwig \& Reitzner \cite{MR2680490}, Naor \cite{MR2262841}, Naor \& Romik \cite{MR1962135}, Paouris \& Werner \cite{MR2880241}, Stancu \cite{MR1901250,MR2019226}, Xiong \cite{MR2729006}, Zhu \cite{MR3228445}, and Zou \& Xiong \cite{MR3255458}. The characterization problem for the cone volume measure is the logarithmic Minkowski problem. A complete solution to the existence part of the logarithmic Minkowski problem, when restricting to even measures and the class of origin-symmetric convex bodies, was given by B\"{o}r\"{o}czky-LYZ \cite{BLYZ}. In the general case (non-even case), different efforts have been made by B\"{o}r\"{o}czky, Heged\H{u}s \& Zhu \cite{Boroczky20062015}, Stancu \cite{MR1901250,MR2019226}, Zhu \cite{MR3228445}, and most recently by Chen, Li \& Zhu \cite{CLZ}. The logarithmic Minkowski problem has strong connections with isotropic measures (B{\"o}r{\"o}czky-LYZ \cite{MR3316972}) and curvature flows (Andrews \cite{MR1714339,MR1949167}).

In a groundbreaking work \cite{HLYZ}, Huang-LYZ discovered a new family of geometric measures called dual curvature measures $\widetilde{C}_q(K,\cdot)$ and the variational formula that leads to them. The dual Minkowski problem---the problem of prescribing dual curvature measures---was posed as well. The dual Minkowski problem miraculously contains problems such as the Aleksandrov problem ($q=0$) and the logarithmic Minkowski problem ($q=n$) as special cases. The problem quickly became the center of attention, see, for example, \cite{BF,BHP, BLYZZ,MR3725875,HLYZ2, HZ,GHWXY,LYZpDC,LSW,Zha1, Zha2}. 

The variational formula for Aleksandrov's integral curvature obtained in \cite{HLYZ} allowed the following discovery, see \cite{HLYZ2}: for each $0\neq p\in \mathbb{R}$ and $K\in \mathcal{K}_o^n$, define the $L_p$ Aleksandrov integral curvature, $J_p(K,\cdot)$, of $K$ as the unique Borel measure on $S^{n-1}$ such that 
\begin{equation}
\left.\frac{d}{dt}\mathcal{E}(K\hat{+}_p t\cdot Q)\right|_{t=0}=\frac{1}{p} \int_{S^{n-1}}\rho_Q(u)^{-p}dJ_p(K,u)
\end{equation}
holds for every $Q\in \ko$, where $\mathcal{E}(\cdot)$ is the entropy integral defined by 
\begin{equation}
\label{eq definition of E}
\mathcal{E}(Q)=-\int_{S^{n-1}}\log h_Q(v)dv,
\end{equation} 
and $K\hat{+}_p t\cdot Q$ is the harmonic $L_p$-combination defined by 
\begin{equation}
K\hat{+}_p t\cdot Q=\left(K^*+_p t\cdot Q^*\right)^*.
\end{equation}
Here $K^*$ is the polar body of $K$.

The $L_p$ Aleksandrov integral curvature is absolutely continuous with respect to the classical Aleksandrov integral curvature $J(K,\cdot)$:
\begin{equation}
dJ_p(K,\cdot)=\rho_K^pdJ(K,\cdot).
\end{equation}
Hence $J_p(K,\cdot)$ is defined for $p=0$ and $J_0(K,\cdot)=J(K,\cdot)$.

The $L_p$ Aleksandrov problem is the measure characterization problem for the $L_p$ Aleksandrov integral curvature.
\begin{problem}[The $L_p$ Aleksandrov problem]
Given a non-zero finite Borel measure $\mu$ on $S^{n-1}$ and $p\in \mathbb{R}$. What are the necessary and sufficient conditions on $\mu$ so that there exists $K\in \mathcal{K}_o^n$ such that $\mu = J_p(K,\cdot)$?
\end{problem}

When the given measure $\mu$ has a density $f$, solving the $L_p$ Aleksandrov problem is the same as solving the following Monge-Amp\`{e}re type equation on $S^{n-1}$:
\begin{equation}
h^{1-p} (|\nabla h|^2+h^2)^{-\frac{n}{2}}\det(\nabla^2 h+hI)=f,
\end{equation}
where $h$ is the unknown, $\nabla h$ and $\nabla^2 h$ are the gradient and Hessian of $h$ on $S^{n-1}$ with respect to an orthonormal basis, and $I$ is the identity matrix.

Huang-LYZ established the existence part of the problem in several situations. When $p>0$, the existence part is completely established.
\begin{theorem}[\cite{HLYZ2}]
\label{thm 1}
Suppose $p\in (0,\infty)$ and $\mu$ is a non-zero finite Borel measure on $S^{n-1}$. There exists $K\in \mathcal{K}_o^n$ such that $\mu = J_p(K,\cdot)$ if and only if $\mu$ is not concentrated in any closed hemisphere.
\end{theorem}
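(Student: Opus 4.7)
The plan is to prove necessity directly and sufficiency by a variational argument built on the given variational formula for the entropy integral $\mathcal{E}$.

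For necessity, observe that $dJ_p(K,\cdot)=\rho_K^{p}\,dJ(K,\cdot)$ with $\rho_K>0$, so $J_p(K,\cdot)$ and the classical Aleksandrov integral curvature $J(K,\cdot)$ share their null sets. A standard argument, using that for $K\in\mathcal{K}_o^n$ the spherical image map covers $S^{n-1}$, shows that $J(K,\cdot)$ assigns positive mass to every open hemisphere. The same then holds for $J_p(K,\cdot)$, so it cannot be concentrated on any closed hemisphere.

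For sufficiency, the starting computation is the identity
\begin{equation*}
\rho_{K\,\hat{+}_p\,t\cdot Q}^{-p}=\rho_K^{-p}+t\,\rho_Q^{-p},
\end{equation*}
which follows from $K\,\hat{+}_p\,t\cdot Q=(K^*+_p t\cdot Q^*)^*$ together with the polarity relations $h_{K^*}=1/\rho_K$ and $\rho_{K^*}=1/h_K$. In particular, one also has $\tfrac{d}{dt}\int\rho_{K\,\hat{+}_p\,t\cdot Q}^{-p}\,d\mu\big|_{t=0}=\int\rho_Q^{-p}\,d\mu$. Combined with the given variational formula for $\mathcal{E}$, this leads one to consider the scale-invariant functional
\begin{equation*}
\Phi(Q)=-\mathcal{E}(Q)+\frac{|S^{n-1}|}{p}\log\int_{S^{n-1}}\rho_Q(u)^{-p}\,d\mu(u),\qquad Q\in\mathcal{K}_o^n.
\end{equation*}
A direct first-variation computation shows that if $K$ minimizes $\Phi$, then $J_p(K,\cdot)=\alpha\mu$ for some $\alpha>0$; the homogeneity $J_p(\lambda K,\cdot)=\lambda^p J_p(K,\cdot)$ then allows one to rescale to conclude $J_p(\alpha^{-1/p}K,\cdot)=\mu$.

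The bulk of the proof lies in establishing existence of a minimizer. Take a minimizing sequence $\{Q_j\}\subset\mathcal{K}_o^n$ normalized (via scale-invariance) so that $\int\rho_{Q_j}^{-p}\,d\mu=|\mu|$. A Blaschke-selection argument, combined with the bound on $\mathcal{E}(Q_j)$ coming from the minimizing property, produces a convergent subsequence $Q_j\to K$, and the crucial point is to show $K\in\mathcal{K}_o^n$. If instead $h_K(u_0)=0$ for some $u_0\in S^{n-1}$, then $K$ lies in the half-space $\{x\cdot u_0\leq 0\}$, so $\rho_K$ vanishes on the open hemisphere $\{u\cdot u_0>0\}$ and $\rho_{Q_j}^{-p}\to+\infty$ pointwise there (here $p>0$ is essential). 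Fatou's lemma applied to $\int\rho_{Q_j}^{-p}\,d\mu=|\mu|$ then forces $\mu$ to have no mass on that open hemisphere, contradicting the non-concentration hypothesis. Ruling out this degeneration is the main obstacle; it is precisely when $p<0$ that the corresponding penalty $\rho^{-p}$ disappears on thin regions, and this breakdown is exactly what necessitates the restriction to origin-symmetric polytopes in the present paper.
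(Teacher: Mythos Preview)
This theorem is not proved in the present paper; it is quoted from \cite{HLYZ2} and no argument is given here beyond the citation. There is therefore no proof in this paper against which to compare your attempt.

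That said, your variational approach is exactly in the spirit of the framework the paper does quote: Lemma~\ref{lemma in HLYZ2} (Lemma~5.3 of \cite{HLYZ2}) asserts that a maximizer of $\Psi(Q)=\tfrac{1}{n\omega_n}\mathcal{E}(Q)-\tfrac{1}{p}\log\int\rho_Q^{-p}\,d\mu$ produces a solution, and your $\Phi$ is the same object up to sign and a constant factor. Your necessity argument and the Fatou-based non-degeneracy step for the minimizer are the correct ideas.

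Two technical points in your sketch deserve more care. First, the identity $\rho_{K\hat{+}_p t\cdot Q}^{-p}=\rho_K^{-p}+t\,\rho_Q^{-p}$ holds exactly only for $p\ge 1$; for $0<p<1$ the $L_p$ combination is defined as a Wulff shape, and in general one only has $h_{\bm{[}f\bm{]}}\le f$. The variational formula for $\mathcal{E}$ is still valid at $t=0$, but you cannot simply invoke the exact identity. Second, your compactness step is underdeveloped: the normalization $\int\rho_{Q_j}^{-p}\,d\mu=|\mu|$ does not by itself bound the diameter of $Q_j$, and a bound on $\int\log h_{Q_j}\,dv$ from above does not force a diameter bound without first knowing $h_{Q_j}$ is uniformly bounded below. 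The missing step is to use the non-concentration hypothesis together with the normalization (via a Markov-type argument on $\rho_{Q_j}^{-p}$) to obtain a uniform lower bound $h_{Q_j}\ge c_0>0$, after which the entropy bound yields the diameter bound needed for Blaschke selection.
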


The case $p<0$ is much more complicated. The existence part of the $L_p$ Aleksandrov problem was only able to be established under very strong assumptions.
\begin{theorem}[\cite{HLYZ2}]
\label{HLYZ theorem}
Suppose $p\in (-\infty,0)$ and $\mu$ is a non-zero even finite Borel measure on $S^{n-1}$. If $\mu$ vanishes on all great subspheres of $S^{n-1}$, then there exists $K\in \mathcal{K}_o^n$ such that $\mu=J_p(K,\cdot)$.
\end{theorem}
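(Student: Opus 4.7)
The plan is a variational approach driven by the differential formula for the entropy. Along harmonic $L_p$-combinations one has
\[
\rho_{K\hat{+}_p t\cdot Q}^{-p} = \rho_K^{-p} + t\,\rho_Q^{-p},
\]
so the functional $\Phi(K) := \int_{S^{n-1}} \rho_K^{-p}\, d\mu$ is affine in $t$, with derivative $\int_{S^{n-1}}\rho_Q^{-p}\,d\mu$ at $t=0$. Together with the given formula for $\frac{d}{dt}\mathcal{E}$, this suggests the constrained problem
\[
\inf\bigl\{\mathcal{E}(K) : K \in \mathcal{K}_e^n,\ \Phi(K) = |\mu|\bigr\}.
\]
The restriction to $\mathcal{K}_e^n$ is legitimate because $\mu$ is even and the class is preserved under harmonic $L_p$-combination; multiples of the unit ball serve as competitors, so the infimum is a real number.

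The hardest step is to produce a minimizer $K_0 \in \mathcal{K}_e^n$. Take a minimizing sequence $\{K_j\}\subset\mathcal{K}_e^n$. The constraint $\Phi(K_j)=|\mu|$ together with $-p>0$ gives elementary bounds that prevent $K_j$ from shrinking to a point or blowing up in all directions, and Blaschke's selection theorem then yields a Hausdorff-convergent subsequence with limit $K_0$. The actual obstacle is showing $K_0$ has nonempty interior. This is precisely where the two hypotheses are essential. Evenness restricts potential degeneration to collapse onto a proper linear subspace or the development of thin spikes; in either scenario, $\rho_{K_j}$ becomes small on a neighborhood of some great subsphere $S^{n-1}\cap\xi$. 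A careful decomposition of $S^{n-1}$ into such a neighborhood and its complement, combined with the hypothesis that $\mu$ vanishes on every great subsphere (so that the $\mu$-mass of the thin neighborhood can be made arbitrarily small), will force a contradiction with $\Phi(K_j)=|\mu|$.

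Given the minimizer $K_0\in\mathcal{K}_e^n$, I conclude via Lagrange multipliers. For any $Q\in\mathcal{K}_o^n$, the curve $t\mapsto K_0\hat{+}_p t\cdot Q$, renormalized to preserve the constraint, is admissible, and comparing the derivatives of $\mathcal{E}$ and $\Phi$ at $t=0$ produces $\lambda\in\R$ such that
\[
\frac{1}{p}\int_{S^{n-1}}\rho_Q^{-p}\,dJ_p(K_0,u) = \lambda\int_{S^{n-1}}\rho_Q^{-p}\,d\mu(u)
\]
for every $Q\in\mathcal{K}_o^n$. Since $\{\rho_Q^{-p}: Q\in\mathcal{K}_o^n\}$ separates finite Borel measures on $S^{n-1}$, this yields $J_p(K_0,\cdot)=p\lambda\,\mu$. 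Nonnegativity of both sides together with $p<0$ forces $\lambda<0$ (and $\lambda\neq 0$ since $J_p(K_0,S^{n-1})>0$), so via the homogeneity $J_p(cK,\cdot)=c^p J_p(K,\cdot)$ I rescale $K_0$ by the unique $c>0$ with $c^p = 1/(p\lambda)$, producing the desired body $K\in\mathcal{K}_e^n$ with $J_p(K,\cdot)=\mu$.
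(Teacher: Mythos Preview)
This theorem is not proved in the present paper; it is quoted from \cite{HLYZ2}. The only trace of its proof here is Lemma~\ref{lemma in HLYZ2}, which records that a \emph{maximizer} of $\Psi(Q)=\frac{1}{n\omega_n}\mathcal{E}(Q)-\frac{1}{p}\log\int_{S^{n-1}}\rho_Q^{-p}\,d\mu$ over $\mathcal{K}_e^n$ yields a solution after rescaling. Under your normalization $\int\rho_K^{-p}\,d\mu=|\mu|$ this amounts to \emph{maximizing} $\mathcal{E}$, so your optimization direction is reversed, and this is a genuine gap: your infimum is $-\infty$. Already in $\mathbb{R}^2$ with $\mu$ the uniform measure on $S^1$ and $-1<p<0$, the ellipse $E_{a,b}$ with semiaxes $a,b$ has $\mathcal{E}(E_{a,b})=-2\pi\log\tfrac{a+b}{2}$, while $\int_0^{2\pi}\rho_{E_{a,b}}^{-p}\,d\theta$ increases continuously in $a$ from $2\pi b^{-p}$ at $a=b$ to the finite limit $b^{-p}\int_0^{2\pi}|\sin\theta|^{p}\,d\theta$ as $a\to\infty$. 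Choosing $b<1$ so that this limit barely exceeds $2\pi$, the constraint forces $a(b)\to\infty$ and hence $\mathcal{E}(E_{a(b),b})\to-\infty$. Thus the ``spike'' scenario is not ruled out by the constraint, and no minimizer exists. With the correct (maximization) direction, a spike drives $\mathcal{E}\to-\infty$ and is therefore harmless for a supremum, while collapse onto a proper subspace $S$ drives $\int\rho_{K_j}^{-p}\,d\mu\to 0$ by dominated convergence since $\mu(S\cap S^{n-1})=0$, which violates the constraint; so a maximizing sequence stays in a compact part of $\mathcal{K}_e^n$.

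Two smaller corrections. For $p<0$ the identity $\rho_{K\hat{+}_p t\cdot Q}^{-p}=\rho_K^{-p}+t\,\rho_Q^{-p}$ generally fails, since the Wulff shape of $(h_{K^*}^p+t\,h_{Q^*}^p)^{1/p}$ need not have this function as its support function; only the derivative at $t=0$ survives, which is all your Lagrange step needs. And when $K_j$ collapses onto $\xi^\perp$, it is \emph{away from} the great subsphere $\xi^\perp\cap S^{n-1}$ that $\rho_{K_j}\to 0$, not on a neighborhood of it; the contradiction with the constraint then comes from $\mu(\xi^\perp\cap S^{n-1})=0$.
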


Note that the conditions in Theorem \ref{HLYZ theorem} are quite strong. In particular, an important class of convex bodies---polytopes---are not included in the solution, for the simple reason that the $L_p$ Aleksandrov integral curvature of a polytope must be discrete and therefore must obtain positive measure on many great subspheres. In fact, the classical Minkowski problem and some cases of the $L_p$ Minkowski problem were first solved for the polytopal case and then solved for the general case using approximation.

Theorem \ref{HLYZ theorem} was shown using variational method. What makes the problem especially challenging in the case when $\mu$ has a positive concentration in one of the proper subspaces is the behavior of the functional $\Phi$ in the maximization problem \eqref{eq variational problem}. See also \eqref{eq local 1002}. When the given measure $\mu$ has even the slightest concentration in a proper subspace $u^\perp$, the functional $\Phi$ will still obtain a finite number for any convex body in $u^\perp$. This feature of $\Phi$ made it extremely challenging to show that the final solution $K$ does not collapse into a lower dimensional subspace. 

The aim of the current work is to show that the $L_p$ Aleksandrov problem has a solution when $-1<p<0$ and the given measure $\mu$ is an even discrete measure.
\begin{theorem}
\label{main theorem}
Suppose $p\in (-1,0)$ and $\mu$ is a non-zero, even, discrete, finite, Borel measure on $S^{n-1}$. There exists an origin-symmetric polytope $K\in \mathcal{K}_e^n$ such that $\mu=J_p(K,\cdot)$ if and only if $\mu$ is not concentrated entirely on any great subspheres.
\end{theorem}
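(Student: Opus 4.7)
My approach is the variational method driven by the first-variation formula of Huang-LYZ. \textbf{Necessity} is immediate: if $K \in \ke$ is an origin-symmetric polytope with $J_p(K,\cdot) = \mu$, then $J_p(K,\cdot)$ is supported on the finite symmetric set of vertex directions of $K$, so $\mu$ is even and discrete; and since $K$ is full-dimensional and origin-symmetric, those vertex directions linearly span $\mathbb{R}^n$, so $\mu$ cannot be concentrated on any great subsphere.

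For sufficiency, write $\mu = \sum_{i=1}^N \alpha_i \delta_{u_i}$ with $\{u_i\}$ symmetric and spanning, and work in the family $\mathcal{F}$ of origin-symmetric polytopes of the form $P = \mathrm{conv}\{\rho_i u_i\}$ with $\rho_i \ge 0$ and $\rho_{-i} = \rho_i$. The plan is to maximize the entropy $\mathcal{E}(P)$ over $P \in \mathcal{F}$ subject to the normalization $\int_{S^{n-1}} \rho_P^{-p}\, d\mu = |S^{n-1}|$. This normalization is natural because any $K$ with $J_p(K,\cdot) = \mu$ must satisfy $\int \rho_K^{-p}\, d\mu = \int \rho_K^{-p}\, dJ_p(K,\cdot) = |S^{n-1}|$ by the identity $dJ_p = \rho_K^p\, dJ$ and the total measure identity $J(K, S^{n-1}) = |S^{n-1}|$. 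At any full-dimensional critical point $K$ of this constrained problem, applying the Huang-LYZ variational formula along $K \hat{+}_p tQ$ for arbitrary $Q \in \ko$, together with a Lagrange multiplier (which is forced to equal $1/p$ by the test choice $Q = K$), yields $\int \rho_Q^{-p}\, d(J_p(K,\cdot) - \mu) = 0$ for every $Q$, hence $J_p(K,\cdot) = \mu$.

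Existence of a maximizer is handled by compactness: the constraint combined with $-p > 0$ bounds each $\rho_P(u_i)$ above by a constant depending only on $\mu$, so every feasible $P$ is contained in a fixed Euclidean ball and a maximizing sequence $P_k$ subsequentially Hausdorff-converges to some $P_\infty$. Since $\log h_{P_k}$ is uniformly bounded above, Fatou's lemma gives that $\mathcal{E}$ is upper semicontinuous, so the supremum is attained at $P_\infty$. The key remaining task---and the principal obstacle---is to show $P_\infty$ is full-dimensional. The constraint alone does not prevent collapse into a proper subspace $H$, because if $P_\infty \subset H$ then $\rho_{P_k}(u_j) \to 0$ for each $u_j \notin H$ and hence $\rho_{P_k}(u_j)^{-p} \to 0$ (since $-p > 0$), so those terms simply vanish from the sum rather than blowing up; this is exactly the feature of the functional flagged in the discussion preceding the theorem.

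To exclude a degenerate maximizer, suppose for contradiction $P_\infty$ lies in a proper subspace $H$. Because $\mu$ is not concentrated on any great subsphere there is some $u_j \notin H$; form the competitor $P_\epsilon = \mathrm{conv}\{\lambda(\epsilon) P_\infty,\, \pm \epsilon u_j\} \in \mathcal{F}$, where $\lambda(\epsilon)$ is chosen so that $P_\epsilon$ satisfies the constraint. A direct asymptotic computation shows that the new bi-conical caps contribute $\epsilon^{-p}A + o(\epsilon^{-p})$ to $\int \rho_P^{-p}\, d\mu$ with $A > 0$, forcing $-\log\lambda(\epsilon) \sim c\,\epsilon^{-p}$ with $c > 0$. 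Splitting $\mathcal{E}(P_\epsilon) - \mathcal{E}(P_\infty) = -|S^{n-1}| \log\lambda(\epsilon) + [\mathcal{E}(P_\epsilon) - \mathcal{E}(\lambda(\epsilon) P_\infty)]$, the first term has leading order $+|S^{n-1}|\,c\,\epsilon^{-p}$, while the second---localized in the thin spherical sector near $\pm u_j$ where $h_{P_\epsilon}$ strictly exceeds $h_{\lambda(\epsilon) P_\infty}$---is of order $-\epsilon^{n-1}|\log\epsilon|$. The hypothesis $p > -1$ is precisely what makes $\epsilon^{-p}$ dominate $\epsilon^{n-1}|\log\epsilon|$ as $\epsilon \to 0^+$, so $\mathcal{E}(P_\epsilon) > \mathcal{E}(P_\infty)$ for sufficiently small $\epsilon$, contradicting the maximality of $P_\infty$. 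With non-degeneracy secured, the Lagrange computation completes the proof; the critical value $p = -1$ reflects a genuine obstruction for more negative exponents.
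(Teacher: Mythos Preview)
Your overall strategy coincides with the paper's: recast the problem as a maximization over the discrete family of polytopes on the support directions of $\mu$, obtain a limit maximizer by compactness, and rule out degeneration into a proper subspace by a perturbation estimate that pits the $\epsilon^{-p}$ gain in the constraint term against the entropy loss from thickening. There is, however, a concrete error in your asymptotics for the entropy loss. If the degenerate limit $P_\infty$ spans a $k$-dimensional subspace $H$, the set on which $h_{P_\epsilon} > h_{\lambda(\epsilon)P_\infty}$ is \emph{not} ``a thin spherical sector near $\pm u_j$'' but a tubular neighborhood of $H^\perp \cap S^{n-1}$ of angular width $O(\epsilon)$; because of the Jacobian factor $\cos^{k-1}\phi$ in the decomposition \eqref{eq local 1005}, this region has spherical measure of order $\epsilon^{k}$, and the entropy loss is $O(\epsilon^{k}|\log\epsilon|)$, not $O(\epsilon^{n-1}|\log\epsilon|)$. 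With your stated order one would only deduce $-p < n-1$, which does not explain why the threshold is $p=-1$; the genuine worst case is $k=1$ (the limit is a segment), and the requirement that $\epsilon^{-p}$ dominate $\epsilon\,|\log\epsilon|$ is precisely $-p<1$. The paper makes this transparent by partitioning $S^{n-1}$ into three $\phi$-regions (Lemma~\ref{lemma partition}) and differentiating in $t$: the competing terms in $G'(t)$ are $t^{-p-1}$ against a bounded multiple of $\log t$, and $p>-1$ is exactly what forces $t^{-p-1}\to\infty$.

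A second, smaller gap: you invoke the Huang--LYZ first-variation formula against \emph{arbitrary} $Q\in\ko$ at a maximizer obtained only over the discrete family $\mathcal{F}$. This passage requires the observation of Lemma~\ref{lemma optimization problem}: for any $Q\in\ke$, replacing $Q$ by its discretization $\widetilde Q=\mathrm{conv}\{\rho_Q(u_i)u_i\}$ only increases the functional, so the discrete maximum is in fact the global maximum over $\ke$, after which Lemma~\ref{lemma in HLYZ2} applies directly and the Lagrange-multiplier bookkeeping becomes unnecessary.
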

Note that although still variational in nature, the approach here is vastly different from that in \cite{HLYZ2}. It should also be pointed out that one might use the solution obtained in the current work to obtain a solution to the even $L_p$ Aleksandrov problem for $-1<p<0$ via approximation.

\section{Preliminaries}This section is divided into two subsections. In the first subsection, basics in the theory of convex bodies will be covered. In the second subsection, the notion of $L_p$ Aleksandrov integral curvature and $L_p$ Aleksandrov problem will be introduced.

\subsection{Basics in the theory of convex bodies} The book \cite{schneider2014} by Schneider offers a comprehensive overview of the theory of convex bodies.

Let $\rn$ be the $n$-dimensional Euclidean space. The unit sphere in $\rn$ is denoted by $\sn$ and the volume of the unit ball will be written as $\omega_n$. A convex body in $\rn$ is a compact convex set with nonempty interior. The boundary of $K$ is written as $\partial K$. Denote by $\ko$ the class of convex bodies that contain the origin in their interiors in $\rn$ and by $\ke$ the class of origin-symmetric convex bodies in $\rn$.

Let $K$ be a compact convex subset of $\rn$. The support function $h_K$ of $K$ is defined by
\[ h_K(y) = \max\{x\cdot y : x\in K\}, \quad y\in\rn. \]
The support function $h_K$ is a continuous function homogeneous of degree 1. Suppose $K$ contains the origin in its interior. The radial function $\rho_K$ is defined by
\[ \rho_K(x) = \max\{\lambda : \lambda x \in K\}, \quad x\in \rn\setminus \{0\}. \]
The radial function $\rho_K$ is a continuous function homogeneous of degree $-1$. It is not hard to see that $\rho_K(u)u \in
\partial K$ for all $u\in S^{n-1}$.

For a convex body $K\in \ko$, the polar body of $K$ is given by
\begin{equation*}
K^* = \{y\in \rn: y\cdot x\leq 1, \text{ for all } x\in K\}.
\end{equation*}
It is simple to check that $K^*\in \ko$ and that
\begin{equation}
\label{eq polar body support function}
\begin{aligned}
h_{K^*}(x) &= 1/\rho_K(x),\\
\rho_{K^*}(x) &= 1/h_K(x),
\end{aligned}
\end{equation}
for $x\in \rn \setminus \{o\}$. Moreover, we have $(K^*)^*=K$.

For each $f\in C^+(\sn)$, the Wulff shape $\bm{[}f\bm{]}$ generated by $f$ is the convex body defined by
\begin{equation*}
\bm{[}f\bm{]}= \{x\in \rn: x\cdot v \leq f(v), \text{ for all }v \in \sn\}.
\end{equation*}
It is apparent that $h_{\bm{[}f\bm{]}}\leq f$ and $\bm{[}h_K\bm{]} = K$ for each $K\in \ko$.

The $L_p$ combination of two convex bodies $K, L \in \ko$ was first studied by Firey and was the starting point of the now rich $L_p$ Brunn-Minkowski theory developed by Lutwak~\cite{MR1231704,MR1378681}. For $t,s>0$, the $L_p$ combination of $K$ and $L$, denoted by $t\cdot K+_{\mkern-2.7mu p} s\cdot L$, is defined to be the Wulff shape generated by the function $h_{t,s}$ where
\begin{equation*}
h_{t,s}=
\begin{cases}
\left(th_K^p+sh_L^p\right)^\frac{1}{p}, & \text{ if } p\neq 0,\\
h_K^th_L^s, & \text{ if } p=0.
\end{cases}
\end{equation*}
When $p\geq 1$, by the convexity of $\ell_p$ norm, we get that
\begin{equation*}
h_{K+_{\mkern-2.7mu p}t\cdot L}^p = h_K^p+th_L^p.
\end{equation*}
Define the $L_p$ harmonic combination $t\cdot K \hat{+}_{\mkern-2.7mu p} s\cdot L$ by 
\begin{equation}
t\cdot K \hat{+}_{\mkern-2.7mu p} s\cdot L = \left( t\cdot K^*{+}_{\mkern-2.7mu p} \,s\cdot L^*\right)^*.
\end{equation}

Suppose $K_i$ is a sequence of convex bodies in $\rn$. We say $K_i$ converges to a compact convex subset $K\subset \rn$ if
\begin{equation}
\label{eq convergence convex bodies}
\max\{|h_{K_i}(v)-h_K(v)|:v\in S^{n-1}\}\rightarrow 0,
\end{equation}
as $i\rightarrow \infty$. If $K$ contains the origin in its interior, equation \eqref{eq convergence convex bodies} implies
\begin{equation*}
\max\{|\rho_{K_i}(u)-\rho_K(u)|:u\in S^{n-1}\}\rightarrow 0,
\end{equation*}
as $i\rightarrow \infty$.

For a compact convex subset $K$ in $\rn$ and $v \in \sn$, the supporting hyperplane $H(K,v)$ of $K$ at $v$ is given by
\begin{equation*}
H(K,v)=\{x\in K: x\cdot v = h_K(v)\}.
\end{equation*}
By its definition, the supporting hyperplane $H(K,v)$ is non-empty and contains only boundary points of $K$. For $x\in H(K,v)$, we say $v$ is an outer unit normal of $K$ at $x\in \partial K$.

Let $\omega\subset \sn$ be a Borel set. The radial Gauss image of $K$ at $\omega$, denoted by $\bm{\alpha}_K(\omega)$, is defined to be the set of all unit vectors $v$ such that $v$ is an outer unit normal of $K$ at some boundary point $u\rho_K(u)$ where $u\in \omega$, i.e.,
\begin{equation*}
\bm{\alpha}_K(\omega) = \{v \in S^{n-1}: v\cdot u\rho_K(u) = h_K(v) \text{ for some }u\in \omega\}.
\end{equation*}
When $\omega = \{u\}$ is a singleton, we usually write $\bm{\alpha}_K(u)$ instead of the more cumbersome notation $\bm{\alpha}_K(\{u\})$. Let $\omega_K$ be the subset of $\sn$ such that $\bm{\alpha}_K(u)$ contains more than one element for each $u\in \omega_K$. By Theorem 2.2.5 in \cite{schneider2014}, the set $\omega_K$ has spherical Lebesgue measure 0. The radial Gauss map of $K$, denoted by $\alpha_K$, is the map defined on $\sn\setminus \omega_K$ that takes each point $u$ in its domain to the unique vector in $\bm{\alpha}_K(u)$. Hence $\alpha_K$ is defined almost everywhere on $\sn$ with respect to the spherical Lebesgue measure.

Let $\eta \subset \sn$ be a Borel set. The reverse radial Gauss image of $K$, denoted by $\bm{\alpha}_K^*(\eta)$, is defined to be the set of all radial directions such that the corresponding boundary points have at least one outer unit normal in $\eta$, i.e.,
\begin{equation*}
\bm{\alpha}_K^*(\eta) = \{u\in \sn: v\cdot u\rho_K(u) = h_K(v) \text{ for some } v \in \eta\}.
\end{equation*}
When $\eta = \{v\}$ is a singleton, we usually write $\bm{\alpha}_K^*(v)$ instead of the more cumbersome notation $\bm{\alpha}_K^*(\{v\})$. Let $\eta_K$ be the subset of $\sn$ such that $\bm{\alpha}_K^*(v)$ contains more than one element for each $v\in \eta_K$. By Theorem 2.2.11 in \cite{schneider2014}, the set $\eta_K$ has spherical Lebesgue measure 0. The reverse radial Gauss map of $K$, denoted by $\alpha_K^*$, is the map defined on $\sn \setminus \eta_K$ that takes each point $v$ in its domain to the unique vector in $\bm{\alpha}_K^*(v)$. Hence $\alpha_K^*$ is defined almost everywhere on $\sn$ with respect to the spherical Lebesgue measure.

\subsection{$L_p$ Aleksandrov integral curvature and the $L_p$ Aleksandrov problem}

For $K\in \ko$, the Aleksandrov integral curvature of $K$, denoted by $J(K,\cdot)$, is a Borel measure on $\sn$ given by 
\begin{equation}
\label{eq local 2000}
J(K,\omega) = \mathcal{H}^{n-1}(\bm{\alpha}_K(\omega)).
\end{equation}
It is apparent that $|J(K,\cdot)| = n\omega_n$. The classical Aleksandrov problem is the measure characterization problem for Aleksandrov integral curvature: \textit{given a Borel measure $\mu$ on $\sn$ with $|\mu|=n\omega_n$, under what conditions on $\mu$ is there a convex body $K\in \ko$ such that $\mu = J(K,\cdot)$?}

The Aleksandrov problem was completely solved by Aleksandrov himself \cite{MR0007625} using his mapping lemma. In particular, there exists a $K\in \ko$ with $\mu=J(K,\cdot)$ if and only if the given measure $\mu$ satisfies the following Aleksandrov condition:
\begin{equation}
\mu(\omega)<\mathcal{H}^{n-1}(\sn \setminus \omega^*),
\end{equation}
for each non-empty spherically convex $\omega\subset \sn$. Here $\omega^*$ is the given by
\begin{equation*}
\omega^*=\{v\in \sn: v\cdot u\leq 0, \forall u\in \omega\}.
\end{equation*} 
Moreover, the convex body $K$, if it exists, is unique up to a dilation. Different approaches to the Aleksandrov problem via optimal mass transport was given by Oliker \cite{MR2332603} and more recently by Bertrand \cite{MR3523119}.

Following the groundbreaking work \cite{HLYZ} by Huang-LYZ, the four authors discovered in \cite{HLYZ2} that Aleksandrov integral curvature arises naturally by ``differentiating'' the entropy integral $\mathcal{E}$ given in \eqref{eq definition of E}. In particular, 
\begin{equation}
\label{eq local 2001}
\left.\frac{d}{dt}\mathcal{E}(K\hat{+}_{\mkern-2.7mu o}\, t\cdot Q)\right|_{t=0}=-\int_{S^{n-1}}\log \rho_Q(u)dJ(K,u)
\end{equation}
holds for each $Q\in \ko$. Note that, instead of defining Aleksandrov integral curvature as in \eqref{eq local 2000}, one may define $J(K,\cdot)$ as the unique Borel measure on $\sn$ such that \eqref{eq local 2001} holds for each $Q\in \ko$. Inspired by the success of the $L_p$ Brunn-Minkowski theory over the last three decades, Huang-LYZ \cite{HLYZ2} discovered a new family of geometric measures, called the $L_p$ Aleksandrov integral curvature. For $K\in \ko$ and $p\neq 0$, the $L_p$ Aleksandrov integral curvature of $K$, $J_p(K,\cdot)$, is defined to be the unique Borel measure on $\sn$ such that 
\begin{equation}
\left.\frac{d}{dt}\mathcal{E}(K\hat{+}_{\mkern-2.7mu p} \,t\cdot Q)\right|_{t=0}=\frac{1}{p} \int_{S^{n-1}}\rho_Q(u)^{-p}dJ_p(K,u)
\end{equation}
holds for each $Q\in \ko$.

The $L_p$ Aleksandrov integral curvature has the following integral representation:
\begin{equation}
J_p(K,\cdot )=\rho_K^p dJ(K,\cdot).
\end{equation}
Hence, we may define $J_0(K,\cdot)$ as the classical Aleksandrov integral curvature $J(K,\cdot)$.

When the body $K$ is sufficiently smooth, the $L_p$ Aleksandrov integral curvature $J(K,\cdot)$ is absolute continuous with respect to the spherical Lebesgue measure and its Radon-Nikodym derivative is given by  
\begin{equation}
\label{eq local 2002}
h^{1-p}(|\nabla h|^2+h^2)^{-\frac{n}{2}}\det (\nabla^2 h+hI),
\end{equation}
where $\nabla h$ and $\nabla^2 h$ are the gradient and Hessian of $h$ on $\sn$ with respect to an orthonormal basis.

Huang-LYZ \cite{HLYZ2} posed the following $L_p$ Aleksandrov problem.
\begin{problem}[The $L_p$ Aleksandrov problem]
Given a non-zero finite Borel measure $\mu$ on $S^{n-1}$ and $p\in \mathbb{R}$. What are the necessary and sufficient conditions on $\mu$ so that there exists $K\in \mathcal{K}_o^n$ such that $\mu = J_p(K,\cdot)$?
\end{problem}

By \eqref{eq local 2002}, the $L_p$ Aleksandrov problem reduces to the following PDE when the given measure $\mu$ has a density $f$:
\begin{equation}
h^{1-p} (|\nabla h|^2+h^2)^{-\frac{n}{2}}\det(\nabla^2 h+hI)=f.
\end{equation}

When $p=0$, the $L_0$ Aleksandrov problem is nothing but the classical Aleksandrov problem. When $p>0$, the existence  part of the $L_p$ Aleksandrov problem was completely settled in Huang-LYZ \cite{HLYZ2}, see Theorem \ref{thm 1}. However, when $p<0$, a relatively strong condition was required in Huang-LYZ \cite{HLYZ2} to show the existence, see Theorem \ref{HLYZ theorem}. This condition excludes a very important subclass of convex bodies---polytopes. It is the aim of the current work to fill that gap in the case when $-1<p<0$ and the polytope is origin-symmetric. 

The proof adopted here is variational in nature. In Section \ref{sec optimization problem}, we shall convert the $L_p$ Aleksandrov problem, when the given measure is discrete and even, to an optimization problem. In Section \ref{sec solution}, the proposed optimization problem will be solved. The proof to the main theorem, Theorem \ref{main theorem}, is given at the end of Section \ref{sec solution}.
\section{Optimization problem}
\label{sec optimization problem}

The following lemma was given in Huang-LYZ \cite{HLYZ2}, which connects the $L_p$ Aleksandrov problem to an optimization problem.
\begin{lemma}[Lemma 5.3, \cite{HLYZ2}]
\label{lemma in HLYZ2}
Suppose $p\neq 0$. Let $\mu$ be a finite even, Borel measure on $\sn$ and $K\in \ke$ be a body such that 
\begin{equation*}
\Psi(K)=\sup\{\Psi(Q):Q\in \ke\},
\end{equation*}
where $\Psi(Q)=\frac{1}{n\omega_n} \mathcal{E}(Q)-\frac{1}{p}\log \int_{S^{n-1}}\rho_Q^{-p}d\mu$. Then, there exists $c>0$ such that $\mu = J_p(cK,\cdot)$.
\end{lemma}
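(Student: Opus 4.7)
The plan is to extract $\mu = J_p(cK,\cdot)$ from the first-order condition for $K$ at the maximum of $\Psi$. As a preliminary observation, $\Psi$ is invariant under positive dilations $Q\mapsto\lambda Q$: the shift $\mathcal{E}(\lambda Q)=\mathcal{E}(Q)-n\omega_n\log\lambda$ is cancelled exactly by $-\tfrac{1}{p}\log\!\int\rho_{\lambda Q}^{-p}\,d\mu = -\tfrac{1}{p}\log\!\int\rho_Q^{-p}\,d\mu + \log\lambda$, which is why the critical-point condition to come will only fix $\mu$ and $J_p(K,\cdot)$ up to a positive scalar, with that scalar absorbed by the constant $c$.

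Fix any $Q\in\ke$ and consider the path $K_t := K\hat{+}_p t\cdot Q = (K^* +_p t\cdot Q^*)^*$, which still lies in $\ke$ for $|t|$ small. By the maximality of $K$, $\tfrac{d}{dt}\Psi(K_t)|_{t=0}=0$. The entropy contribution is supplied directly by the defining variational formula for $J_p$ recalled in the excerpt, giving $\tfrac{1}{pn\omega_n}\int_{\sn}\rho_Q^{-p}\,dJ_p(K,\cdot)$. For the logarithmic term I would use $\rho_{K_t}^{-p} = h_{K^*+_p t\cdot Q^*}^p$ together with the first-order expansion
\begin{equation*}
h_{K^*+_p t\cdot Q^*}^p(u)\;=\;\rho_K^{-p}(u) + t\,\rho_Q^{-p}(u) + o(t),
\end{equation*}
valid at each $u\in\sn$ because the Wulff-shape construction is tangent to the formal $L_p$ combination to first order at $t=0$. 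A dominated-convergence argument then lets me differentiate past $\int\cdots d\mu$, producing $\tfrac{1}{p}(\int\rho_Q^{-p}\,d\mu)/\!\int\rho_K^{-p}\,d\mu$. Setting the sum of the two contributions to zero and cancelling the common factor $1/p$ yields the critical-point identity
\begin{equation*}
\frac{1}{n\omega_n}\int_{\sn}\rho_Q^{-p}\,dJ_p(K,\cdot)\;=\;\frac{\int_{\sn}\rho_Q^{-p}\,d\mu}{\int_{\sn}\rho_K^{-p}\,d\mu},\qquad\forall\,Q\in\ke.
\end{equation*}

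Set $c_0 := n\omega_n / \!\int_{\sn}\rho_K^{-p}\,d\mu > 0$, so the identity reads $\int\rho_Q^{-p}\,d(J_p(K,\cdot) - c_0\mu) = 0$ for every $Q\in\ke$. Since both $J_p(K,\cdot)$ (because $K\in\ke$) and $\mu$ (by hypothesis) are even, the proof closes provided the family $\{\rho_Q^{-p}:Q\in\ke\} = \{h_M^p:M\in\ke\}$ separates even finite signed Borel measures on $\sn$; letting $M$ range over smooth, strictly convex, origin-symmetric bodies produces a subfamily dense in $C_e(\sn)$, so $J_p(K,\cdot) = c_0\mu$. Finally the scaling identity $J_p(\lambda K,\cdot) = \lambda^p J_p(K,\cdot)$, which follows from $J(\lambda K,\cdot) = J(K,\cdot)$ together with $\rho_{\lambda K} = \lambda\rho_K$, lets me choose $c := c_0^{-1/p} > 0$ to conclude $\mu = J_p(cK,\cdot)$. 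The two places I expect to need genuine care are the density/separation step for $\{h_M^p:M\in\ke\}$ when $-1<p<0$ and the rigorous justification of passing the derivative under the integral when $\mu$ has atoms (the case of interest for the paper's main theorem); both are standard in the $L_p$ Brunn-Minkowski variational framework but constitute the real content of the lemma beyond the formal computation.
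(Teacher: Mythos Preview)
The present paper does not prove this lemma; it is imported from \cite{HLYZ2} (stated there as Lemma~5.3) and used without argument, so there is no in-paper proof to compare your attempt against.

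Your Euler--Lagrange outline is the standard one and is essentially correct. One caution on the separation step: the set $\{h_M^{\,p}:M\in\ke\}$ is \emph{not} itself dense in $C_e(\sn)$ for $p\neq 1$ (support functions satisfy a sublinearity constraint that the $p$-th power does not undo), so your parenthetical justification is too quick. What is true, and suffices, is that the \emph{linear span} of these functions is dense: from $\int h_{rB}^{\,p}\,d\nu=r^p\nu(\sn)=0$ one gets $\nu(\sn)=0$, and then differentiating $\lambda\mapsto\int h_{\lambda M_1+(1-\lambda)B}^{\,p}\,d\nu=\int(1+\lambda(h_{M_1}-1))^p\,d\nu$ at $\lambda=0$ yields $\int h_{M_1}\,d\nu=0$ for every $M_1\in\ke$, reducing to the $p=1$ case. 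Alternatively---and this is the route usually taken in the Huang--LYZ papers---one perturbs by an arbitrary even $g\in C(\sn)$ (via a logarithmic or additive deformation of $\rho_K$ followed by taking the convex hull, and invoking the Aleksandrov-type variational lemma for $\mathcal{E}$), so that the first-order condition reads $\int g\,dJ_p(K,\cdot)=c_0\int g\,d\mu$ for all even continuous $g$ directly, bypassing any separation argument. Your dilation computation producing $c=c_0^{-1/p}$ is correct as written.
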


We shall now adapt the above lemma to the discrete setting.

Suppose $\mu$ is an even discrete measure whose support is $\{\pm u_1, \pm u_2, \dots, \pm u_N\}$. Let $D_\mu\subset \mathcal{K}_o^n$ be the set of all origin-symmetric convex polytopes whose set of vertices is a subset of $\{\pm u_1, \pm u_2,\dots, \pm u_N\}$. It is obvious that if $K\in D_\mu$, then there exists $\rho_1, \dots, \rho_N>0$ such that
\begin{equation}
D_\mu=\text{conv}\{\pm \rho_1u_1, \pm \rho_2 u_2,\dots, \pm \rho_Nu_N\}.
\end{equation}

The following lemma converts the even discrete $L_p$ Aleksandrov problem into a maximization problem.
\begin{lemma}
\label{lemma optimization problem}
Suppose $\mu$ is an even discrete measure and $p\neq 0$. If there exists an origin-symmetric $K\in \mathcal{K}_e^n$ such that 
\begin{equation}
\label{eq variational problem}
\Phi(K)=\sup_{Q\in D_\mu} \Phi(Q),
\end{equation}
where $\Phi:\mathcal{K}_e^n \rightarrow \mathbb{R}$ is given by 
\begin{equation}
\label{eq local 1002}
\Phi(Q) = \frac{1}{n\omega_n} \mathcal{E}(Q)-\frac{1}{p}\log \sum_{u_i\in \supt \mu} \rho_Q(u_i)^{-p}\mu(\{u_i\}),
\end{equation}
then there exists $c>0$ such that 
\begin{equation}
\mu=J_p(cK,\cdot).
\end{equation}
\end{lemma}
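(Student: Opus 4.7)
The plan is to reduce the statement directly to Lemma \ref{lemma in HLYZ2} of Huang-LYZ. Observe first that since $\mu$ is discrete and supported on $\{\pm u_1,\ldots,\pm u_N\}$, the functional $\Phi$ defined in \eqref{eq local 1002} coincides pointwise on $\mathcal{K}_e^n$ with the functional $\Psi$ appearing in Lemma \ref{lemma in HLYZ2}. Consequently, if I can upgrade the maximality of $K$ from the restricted class $D_\mu$ to the full class $\mathcal{K}_e^n$, then Lemma \ref{lemma in HLYZ2} will immediately yield a $c>0$ with $\mu=J_p(cK,\cdot)$.

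The key step is therefore: given any $Q\in\mathcal{K}_e^n$, I would construct a ``polytopal shadow'' $\tilde Q\in D_\mu$ that dominates $Q$ in the $\Phi$-functional. The natural candidate is the largest polytope in $D_\mu$ inscribed in $Q$, namely
\[
\tilde Q:=\mathrm{conv}\{\pm\rho_Q(u_1)u_1,\ldots,\pm\rho_Q(u_N)u_N\}.
\]
By construction $\tilde Q\subseteq Q$, so $h_{\tilde Q}\le h_Q$ pointwise on $\sn$ and hence $\mathcal{E}(\tilde Q)\ge\mathcal{E}(Q)$ by \eqref{eq definition of E}. On the other hand, each point $\rho_Q(u_i)u_i$ is a vertex of $\tilde Q$ lying on $\partial Q$, so $\rho_{\tilde Q}(u_i)=\rho_Q(u_i)$ for every $i$; in particular the discrete sum $\sum_i\rho_{\tilde Q}(u_i)^{-p}\mu(\{u_i\})$ is identical to the corresponding sum for $Q$. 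Combining these two observations gives $\Phi(\tilde Q)\ge\Phi(Q)$ regardless of the sign of $p$, since the second (integral) term is unchanged while the entropy term moves in the favorable direction.

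Chaining the inequalities one finds
\[
\Phi(K)=\sup_{D_\mu}\Phi\;\ge\;\Phi(\tilde Q)\;\ge\;\Phi(Q),
\]
so $K$ in fact maximizes $\Phi=\Psi$ over all of $\mathcal{K}_e^n$. Applying Lemma \ref{lemma in HLYZ2} then finishes the argument.

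I expect no serious obstacle: the main verification is simply the pointwise inequality $h_{\tilde Q}\le h_Q$ together with the identity $\rho_{\tilde Q}(u_i)=\rho_Q(u_i)$, both of which are immediate from the inscription. The only subtlety I anticipate is possible degeneracy of $\tilde Q$ when $\mathrm{span}\{u_1,\ldots,u_N\}\subsetneq\rn$; in that situation $D_\mu$ contains no element of $\mathcal{K}_e^n$ and the hypothesis of the lemma is vacuous, so no additional argument is needed.
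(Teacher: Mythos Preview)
Your proposal is correct and follows essentially the same route as the paper: construct $\tilde Q=\mathrm{conv}\{\pm\rho_Q(u_i)u_i\}\in D_\mu$, use $\tilde Q\subseteq Q$ to get $h_{\tilde Q}\le h_Q$ and $\rho_{\tilde Q}(u_i)\ge \rho_Q(u_i)$ (you in fact observe equality, which holds), deduce $\Phi(\tilde Q)\ge\Phi(Q)$, and then invoke Lemma~\ref{lemma in HLYZ2}. The only addition is your remark on the degenerate non-spanning case, which the paper omits; your handling of it is fine.
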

\begin{proof}
It is obvious that
\begin{equation}
\label{eq local 1001}
\sup_{Q\in D_\mu} \Phi(Q)\leq \sup_{Q\in \mathcal{K}_e^n} \Phi(Q).
\end{equation}
On the other side, for each $Q\in \mathcal{K}_e^n$, let 
\begin{equation}
\widetilde{Q}=\text{conv} \{\rho_{Q}(u_i) u_i: u_i\in \supt \mu\}.
\end{equation}
Then, $\rho_{\widetilde{Q}}(u_i)\geq \rho_{Q}(u_i)$ for each $u_i\in \supt \mu$ and $h_{\widetilde{Q}}(v)\leq h_{Q}(v)$ for each $v\in S^{n-1}$. This implies that
\begin{equation}
\Phi(Q)\leq \Phi(\widetilde{Q}).
\end{equation}
This, in combination with \eqref{eq local 1001}, shows that 
\begin{equation}
\Phi(K)=\sup_{Q\in D_\mu} \Phi(Q)=\sup_{Q\in \mathcal{K}_e^n} \Phi(Q).
\end{equation}

Note that when the given measure $\mu$ is discrete, $\Psi(\cdot)=\Phi(\cdot)$. According to 
Lemma \ref{lemma in HLYZ2}, there exists $c>0$ such that $\mu=J_p(cK,\cdot)$.
\end{proof}

\section{Solving the optimization problem}
\label{sec solution}
\begin{lemma}
\label{lemma convergence}
Suppose $\mu$ is an even discrete measure on $S^{n-1}$ whose support is not contained in any great subspheres. Let $Q^j \in D_\mu$ be such that $\max_{u\in S^{n-1}}\rho_{Q^j}(u)=1$. Assume there exists an origin-symmetric compact convex set $Q^0$ such that $Q^j$ converges to $Q^0$ in Hausdorff metric. Then, by possibly taking a subsequence, 
\begin{equation}
\lim_{j\rightarrow \infty}\Phi(Q^j)=\Phi(Q^0).
\end{equation}
\end{lemma}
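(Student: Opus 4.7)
The plan is to exploit the normalization $\max_{u}\rho_{Q^j}(u)=1$ together with the finite vertex structure to extract a distinguished direction $u_{i_0}$ that remains ``fully extended'' in every $Q^j$, and then to split the analysis of $\Phi$ into the entropy piece (handled by dominated convergence) and the discrete-sum piece (handled by elementary continuity).

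First, since $Q^j\in D_\mu$ is an origin-symmetric polytope with vertex set contained in the finite set $\{\pm u_1,\dots,\pm u_N\}$, I write $Q^j=\mathrm{conv}\{\pm\rho_i^{(j)} u_i\}_{i=1}^N$ with $\rho_i^{(j)}\in[0,1]$. For any origin-symmetric polytope the maximum of the radial function is attained at a vertex, so $\max_i\rho_i^{(j)}=1$. A pigeonhole argument over the finite index set yields, after passing to a subsequence, some $i_0$ with $\rho_{i_0}^{(j)}=1$ for every $j$; in particular $\pm u_{i_0}\in Q^j$, which gives the pointwise lower bound
\begin{equation}
h_{Q^j}(v)\geq |v\cdot u_{i_0}|, \qquad v\in S^{n-1},
\end{equation}
while $Q^j\subset B_1$ forces $h_{Q^j}\leq 1$. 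Hausdorff convergence $Q^j\to Q^0$ is equivalent to uniform convergence $h_{Q^j}\to h_{Q^0}$ on $S^{n-1}$; also $\pm u_{i_0}\in Q^0$, so the linear span $V$ of $Q^0$ has dimension $\geq 1$.

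For the discrete sum in $\Phi$, I first note that Hausdorff convergence gives $\rho_{Q^j}(u_i)\to\rho_{Q^0}(u_i)$ for each $u_i\in\supt\mu$, interpreting $\rho_{Q^0}(u_i)=0$ when $u_i\notin V$ (this falloff follows from the inclusion $Q^j\subset Q^0+\epsilon B$). Because $-p\in(0,1)$, the map $t\mapsto t^{-p}$ extends continuously to $[0,\infty)$ with value $0$ at $0$, so $\rho_{Q^j}(u_i)^{-p}\to\rho_{Q^0}(u_i)^{-p}$, and the full sum converges. The limit sum is strictly positive since $\rho_{Q^0}(u_{i_0})\geq 1$ and $\mu(\{u_{i_0}\})>0$, so $\log$ is continuous at the limit. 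For the entropy piece $\mathcal{E}(Q^j)=-\int_{S^{n-1}}\log h_{Q^j}\,dv$, uniform convergence gives pointwise convergence of $\log h_{Q^j}$ to $\log h_{Q^0}$ on the set $\{h_{Q^0}>0\}$, which is $S^{n-1}\setminus(V^\perp\cap S^{n-1})$ and hence has full measure (as $\dim V\geq 1$). The bound from the first step yields
\begin{equation}
0\leq -\log h_{Q^j}(v)\leq -\log|v\cdot u_{i_0}|,
\end{equation}
and $v\mapsto -\log|v\cdot u_{i_0}|$ is integrable on $S^{n-1}$ (the only singularity lies on the great subsphere $u_{i_0}^\perp$ and is logarithmic, hence integrable in codimension one). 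Dominated convergence then gives $\mathcal{E}(Q^j)\to\mathcal{E}(Q^0)$, and combining with the discrete-sum convergence yields $\Phi(Q^j)\to\Phi(Q^0)$.

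The hard part is the entropy integral when $Q^0$ has empty interior: then $h_{Q^0}$ vanishes on $V^\perp\cap S^{n-1}$, and without care one cannot a priori exclude $\mathcal{E}(Q^0)=+\infty$ or a failure of convergence due to $-\log h_{Q^j}$ becoming large near that subsphere. This is precisely where the normalization-induced bound $h_{Q^j}(v)\geq |v\cdot u_{i_0}|$ is essential, as it supplies a single integrable dominating function valid simultaneously for all $j$, regardless of how degenerate $Q^0$ may be. (Note that this trick relies on $Q^j$ being polytopes built from the finite support of $\mu$, so the distinguished direction $u_{i_0}$ can be chosen once and for all; for general convex bodies in $\mathcal{K}_e^n$ this step would require extra care.)
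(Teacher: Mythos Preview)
Your proof is correct and follows essentially the same approach as the paper: pass to a subsequence so that a fixed vertex direction $u_{i_0}$ satisfies $\rho_{Q^j}(u_{i_0})=1$, use the resulting uniform bound $|v\cdot u_{i_0}|\leq h_{Q^j}(v)\leq 1$ together with the integrability of $-\log|v\cdot u_{i_0}|$ to run dominated convergence for $\mathcal{E}(Q^j)$, and handle the discrete sum by direct pointwise convergence of the radial values. Your write-up is in fact a bit more careful than the paper's in treating the possibility that $Q^0$ has empty interior (spelling out why $\rho_{Q^j}(u_i)\to 0$ for $u_i\notin V$ and why $-p>0$ keeps the sum continuous at $0$), but the core argument is identical.
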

\begin{proof}

Since $Q^j \in D_\mu$, by possibly taking a subsequence, we may assume $\rho_{Q^j}(u_{i_0})=1$ for some $u_{i_0}\in \supt \mu$. By the definition of support function and the fact that $Q^j$ is origin-symmetric,
\begin{equation}
|u_{i_0}\cdot v|\leq h_{Q^j}(v)\leq 1, \qquad v\in S^{n-1}.
\end{equation}
Hence, 
\begin{equation}
|\log h_{Q^j}(v)|\leq -\log |u_{i_0}\cdot v|, \qquad v\in S^{n-1}.
\end{equation}
Notice that $\log |u_{i_0}\cdot v|$ is an integrable function on $S^{n-1}$. Since $Q^j$ converges to $Q^0$ in Hausdorff metric, $h_{Q^j}$ converges to $h_{Q^0}$ pointwise. This,  combined with the fact that $h_{Q^0}>0$ almost everywhere (since $Q^0$ has diameter bigger than 1), implies that $\log h_{Q^j}$ converges to $\log h_{Q^0}$ almost everywhere. By dominated convergence theorem,
\begin{equation}
\label{eq local 1004}
\lim_{j\rightarrow \infty}\mathcal{E}(Q^j) = \mathcal{E}(Q^0).
\end{equation}

On the other side, since $Q^j$ converges to $Q^0$ in Hausdorff metric, we have 
\begin{equation}
\lim_{j\rightarrow \infty}\rho_{Q^j}(u_i)=\rho_{Q^0}(u_i),\qquad \forall u_i\in \supt \mu.
\end{equation}
Hence,
\begin{equation}
\lim_{j\rightarrow \infty} \sum_{u_i\in \supt \mu} \rho_{Q^j}(u_i)^{-p}\mu(\{u_i\})=\sum_{u_i\in \supt \mu} \rho_{Q^0}(u_i)^{-p}\mu(\{u_i\}).
\end{equation}
Since $\rho_{Q^j}(u_{i_0})=1$, we have $\sum_{u_i\in \supt \mu} \rho_{Q^j}(u_i)^{-p}\mu(\{u_i\})>0$. Hence, 
\begin{equation}
\label{eq local 1003}
\lim_{j\rightarrow \infty} \log\sum_{u_i\in \supt \mu} \rho_{Q^j}(u_i)^{-p}\mu(\{u_i\})=\log\sum_{u_i\in \supt \mu} \rho_{Q^0}(u_i)^{-p}\mu(\{u_i\}).
\end{equation}

Equations \ref{eq local 1004} and \eqref{eq local 1003} imply that 
\begin{equation}
\lim_{j\rightarrow \infty}\Phi(Q^j)=\Phi(Q^0).
\end{equation}
\end{proof}

Let $S$ be a $k$-dimensional subspace of $\mathbb{R}^n$. Write $v\in S^{n-1}$ as 
\begin{equation}
\label{eq local 1005}
v=(v_2\cos \phi, v_1 \sin \phi),
\end{equation}
where $v_2\in S^{k-1}\subset S$, $v_1 \in S^{n-k-1}\subset S^\perp$ and $0\leq \phi\leq\pi/2$.

\begin{lemma}
\label{lemma function lower bound}
Suppose $u_1, \dots, u_N$ are $N$ unit vectors such that they are not concentrated on any great subspheres. Let $f:S^{n-1}\rightarrow \mathbb{R}$ be defined as 
\begin{equation}
f(v)=\max_{s+1\leq i\leq N}|v\cdot u_i|,
\end{equation}
where $1\leq s\leq N-1$ is such that $S=\Span\{u_1,\dots, u_s\}$ is a proper subspace of $\mathbb{R}^n$. Then, there exists constants $0<c<1$ and $0<\delta_0<\pi/2$ such that
\begin{equation}
f(v)\geq c,
\end{equation}
for each $v\in S^{n-1}$ with $\phi> \pi/2-\delta_0$. Here $\phi$ comes from the general polar coordinate expression \eqref{eq local 1005}.
\end{lemma}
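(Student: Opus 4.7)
The plan is to reduce to the behavior on the unit sphere of $S^\perp$ (i.e., the locus $\phi=\pi/2$), use the non-concentration hypothesis to obtain a strictly positive lower bound there, and then propagate that bound to a small neighborhood by an elementary trigonometric estimate. Let $k=\dim S$, let $P_{S^\perp}$ denote the orthogonal projection onto $S^\perp$, and set $w_i:=P_{S^\perp}(u_i)$ for $s+1\leq i\leq N$. The key observation is that when $\phi=\pi/2$ one has $v=v_1\in S^\perp$, and therefore $v\cdot u_i=v_1\cdot w_i$ for every $i>s$, since $u_i-w_i\in S$ is orthogonal to $v_1$.

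First I would establish pointwise positivity of the function $g\colon S^{n-k-1}\to\mathbb{R}$ defined by $g(v_1)=\max_{s+1\leq i\leq N}|v_1\cdot w_i|$. Suppose for contradiction $g(v_1)=0$ for some unit $v_1\in S^\perp$. Then $v_1\perp w_i$ for every $i>s$, and combined with $v_1\perp(u_i-w_i)$ this forces $v_1\cdot u_i=0$ for each $i>s$. For $j\leq s$, $v_1\cdot u_j=0$ holds automatically because $u_j\in S$. Thus $\{u_1,\dots,u_N\}\subset v_1^\perp\cap S^{n-1}$, contradicting the non-concentration hypothesis. This is the only step where the hypothesis enters in a genuine way, and I expect it to be the main conceptual point. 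Continuity of $g$ together with compactness of $S^{n-k-1}$ then yields $M:=\min_{S^{n-k-1}}g>0$.

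Next I would extend this lower bound to $\phi$ slightly below $\pi/2$. For $v=v_2\cos\phi+v_1\sin\phi$ and $i>s$, the orthogonal decomposition $u_i=(u_i-w_i)+w_i$ gives
\begin{equation*}
v\cdot u_i=\cos\phi\,\bigl(v_2\cdot(u_i-w_i)\bigr)+\sin\phi\,(v_1\cdot w_i).
\end{equation*}
Since $\sin\phi,\cos\phi\geq 0$ on $[0,\pi/2]$ and $|v_2\cdot(u_i-w_i)|\leq|u_i-w_i|\leq 1$, the reverse triangle inequality yields $|v\cdot u_i|\geq\sin\phi\,|v_1\cdot w_i|-\cos\phi$. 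Choosing $i$ to realize $g(v_1)$ and taking the maximum on the left then gives
\begin{equation*}
f(v)\geq\sin\phi\cdot g(v_1)-\cos\phi\geq\sin\phi\cdot M-\cos\phi.
\end{equation*}

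Finally I would set $c:=\min\{M/2,\,1/2\}$ and pick $\delta_0\in(0,\pi/2)$ so small that $\cos\delta_0\cdot M-\sin\delta_0\geq c$; such $\delta_0$ exists because the left side tends to $M>c$ as $\delta_0\to 0^+$. For $\phi>\pi/2-\delta_0$ the estimate above gives $f(v)\geq c$, and by construction $0<c<1$. Everything outside the positivity argument for $g$ is routine; the only substantive obstacle is locating the use of the non-concentration hypothesis, which is exactly where the contradiction in Step 1 takes place.
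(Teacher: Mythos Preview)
Your proof is correct and follows essentially the same line as the paper's. Both arguments first establish positivity of $f$ on the unit sphere of $S^\perp$ via the non-concentration hypothesis (your function $g$ is exactly $f$ restricted to $S^{n-k-1}\subset S^\perp$, since $v_1\cdot u_i=v_1\cdot w_i$), and then extend this to a neighborhood $\phi>\pi/2-\delta_0$. The only difference is in the extension step: the paper invokes uniform continuity of $f$ on $S^{n-1}$ to pass from the compact set $S^{n-1}\cap S^\perp$ to a metric $\delta_1$-neighborhood, whereas you give an explicit trigonometric estimate $f(v)\geq \sin\phi\cdot M-\cos\phi$ via the orthogonal decomposition of $u_i$. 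Your version is slightly more quantitative, but conceptually identical.
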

\begin{proof}
Note that $f$ is uniformly continuous on $S^{n-1}$. Since $u_1,\dots, u_N$ are not concentrated on any great subspheres, 
\begin{equation}
f(v)>0, \qquad v\in S^\perp.
\end{equation}
By continuity of $f$, there exists $0<c_1<1$ such that 
\begin{equation}
\label{eq local 1006}
f(v)>c_1, \qquad v\in S^\perp.
\end{equation}
Moreover, since $f$ is uniformly continuous, there exists sufficiently small $0<\delta_1<1$ such that $||v_1-v_2||<\delta_1$ implies $|f(v_1)-f(v_2)|<\frac{c_1}{2}$. This, when combined with \eqref{eq local 1006}, show that there exists $0<c<1$ such that 
\begin{equation}
f(v)\geq c, \qquad \text{for } v\in S^{n-1} \text{ with } \text{dist}(v,S^\perp)<\delta_1.
\end{equation}
The desired result now follows from the fact that we can find a sufficiently small $0<\delta_0<1$ such that if $v\in S^{n-1}$ is such that $\phi>\frac{\pi}{2}-\delta_0$, then $\text{dist}(v,S^\perp)<\delta_1$.
\end{proof}

The following lemma partitions $S^{n-1}$ according to the support of a given measure $\mu$.
\begin{lemma}
\label{lemma partition}
Suppose $u_1, \dots, u_N$ are $N$ unit vectors such that they are not concentrated on any great subsphere. Let $1\leq s\leq N-1$ be such that $S=\Span \{u_1,\dots, u_s\}$ is a proper subspace of $\mathbb{R}^n$. Let $R\geq 1$. For $1\leq \rho_1,\dots, \rho_s\leq R$ and sufficiently small $0<t<1$ such that $\arccos \frac{ct}{R}>\frac{\pi}{2}-\delta_0$ where $c$ and $\delta_0$ come from Lemma \ref{lemma function lower bound}, let
\begin{equation}
K^t = \text{conv}\{\pm \rho_1 u_1,\dots, \pm \rho_s u_s, \pm tu_{s+1}, \dots, \pm t u_N\}.
\end{equation}
Denote 
\begin{align*}
\Omega_1 &=\left\{v\in S^{n-1}: \arccos \frac{ct}{R}<\phi<\frac{\pi}{2}\right\}\\
\Omega_2 &= \left\{v\in S^{n-1}: 0\leq \phi<\arccos t\right\}\\
\Omega_3 &= \left\{v\in S^{n-1}: \arccos t\leq \phi \leq \arccos \frac{ct}{R}\right\}.
\end{align*}
Then, for $v\in \Omega_1$,
\begin{equation}
h_{K^t}(v)\leq t, \qquad\text{and }\qquad h_{K^0}(v)\geq \cos \phi;
\end{equation}
for $v\in \Omega_2$,
\begin{equation}
h_{K^t}(v)=h_{K^0}(v);
\end{equation}
for $v\in \Omega_3$,
\begin{equation}
h_{K^t}(v)\leq R, \qquad \text{and } \qquad h_{K^0}(v)\geq \cos \phi.
\end{equation}
\end{lemma}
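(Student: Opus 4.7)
The plan is a direct case analysis built on the standard support--function formula for an origin--symmetric convex hull:
\[
h_{K^t}(v)=\max\!\Bigl\{\max_{1\leq i\leq s}\rho_i|u_i\cdot v|,\ \max_{s+1\leq i\leq N}t|u_i\cdot v|\Bigr\},\qquad h_{K^0}(v)=\max_{1\leq i\leq s}\rho_i|u_i\cdot v|.
\]
The key structural input is that $u_i\in S$ for $i\leq s$: inserting the decomposition $v=v_2\cos\phi+v_1\sin\phi$ from \eqref{eq local 1005} gives $u_i\cdot v=(u_i\cdot v_2)\cos\phi$, so $|u_i\cdot v|\leq\cos\phi$ for every such $i$, while for $i>s$ one retains only the crude bound $|u_i\cdot v|\leq 1$.

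The upper bounds on $h_{K^t}$ in $\Omega_1$ and $\Omega_3$ are then essentially immediate. In $\Omega_1$ the defining condition $\cos\phi<ct/R$ forces $\rho_i|u_i\cdot v|\leq R\cos\phi<ct<t$ for $i\leq s$, and $t|u_i\cdot v|\leq t$ for $i>s$, so $h_{K^t}(v)\leq t$. In $\Omega_3$, since every vertex of $K^t$ has norm at most $R$ (using $\rho_i\leq R$ and $t\leq 1\leq R$), one reads off $h_{K^t}(v)\leq R$ at once. For the lower bounds $h_{K^0}(v)\geq\cos\phi$ in these two regions I would reduce matters to showing $\max_{1\leq i\leq s}\rho_i|u_i\cdot v_2|\geq 1$ uniformly on $S^{k-1}$, which should follow by combining $\rho_i\geq 1$ with a quantitative form of the spanning hypothesis for $u_1,\dots,u_s$ via a compactness argument on $S^{k-1}$.

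In $\Omega_2$, the condition $\cos\phi>t$ combined with the lower bound $h_{K^0}(v)\geq\cos\phi$ gives $t|u_i\cdot v|\leq t<\cos\phi\leq h_{K^0}(v)$ for every $i>s$, so the second maximum in the formula for $h_{K^t}(v)$ is dominated by the first, yielding $h_{K^t}(v)=h_{K^0}(v)$. The main obstacle in executing this plan is the uniform lower bound $h_{K^0}(v)\geq\cos\phi$: it is the quantitative counterpart, on the $S$--side, of Lemma~\ref{lemma function lower bound} on the $S^\perp$--side, and it is what makes the three--region partition work cleanly. Once it is established, the remaining claims are just a matter of comparing the two maxima in the support--function formula across the three regions.
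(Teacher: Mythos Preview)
Your overall strategy---writing $h_{K^t}$ via the support-function formula for a symmetric convex hull and comparing the two maxima region by region---is exactly the paper's. The upper bounds on $\Omega_1$ and $\Omega_3$, and the reduction of the $\Omega_2$ equality to the lower bound $h_{K^0}(v)\geq\cos\phi$, match the paper line for line.

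The step you correctly flag as the obstacle is also where your sketch does not close. You reduce $h_{K^0}(v)\geq\cos\phi$ to $\max_{1\leq i\leq s}\rho_i\,|u_i\cdot v_2|\geq 1$ on $S^{k-1}$ and propose to get this from $\rho_i\geq 1$ together with a compactness argument based on the spanning of $S$. But compactness only yields $\max_{i\leq s}|u_i\cdot v_2|\geq c'$ for some constant $c'\in(0,1]$ depending on $u_1,\dots,u_s$; combined with $\rho_i\geq 1$ this gives $h_{K^0}(v)\geq c'\cos\phi$, not $\cos\phi$. Concretely, with $k=s=2$, $u_1=e_1$, $u_2=e_2$, $\rho_1=\rho_2=1$ one has $\max_i|u_i\cdot v_2|=1/\sqrt{2}$ at $v_2=(e_1+e_2)/\sqrt{2}$, and the $\Omega_2$ equality then fails for $v$ near the upper boundary $\phi=\arccos t$. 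The paper phrases the same claim as the containment $B_S\subset K^0$ (where $B_S$ is the unit ball in $S$) and justifies it by ``since $\rho_i\geq 1$''; this is the identical assertion in different clothing and rests on the same unproved point. In the application to Lemma~\ref{lemma existence of maximizer} the repair is easy: either normalise $Q^0$ so that $B_S$ is its inscribed ball in $S$ (which still forces $\rho_i\geq 1$), or keep the weaker bound $c'\cos\phi$ and shift the $\Omega_2$/$\Omega_3$ boundary from $\arccos t$ to $\arccos(t/c')$; neither change affects the later estimates. Your compactness idea is the right ingredient---it just does not deliver the constant $1$ that the lemma, as stated, demands.
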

\begin{proof}
Throughout the proof, we will use the general polar coordinates \eqref{eq local 1005}.

Assume $v\in \Omega_1$. For $i=1,\dots, s$, 
\begin{equation}
\rho_i|u_i\cdot v|=\rho_i \cos \phi |u_i\cdot v_2|\leq R\cos \phi \leq R\cdot \frac{ct}{R}\leq t. 
\end{equation}
Since 
\begin{equation}
h_{K^t}(v)=\max \left\{\max_{i=1, \dots, s}\rho_i|u_i\cdot v|, \max_{i=s+1,\dots, N}t |u_i\cdot v|\right\},
\end{equation}
we have $h_{K^t}(v)\leq t$. On the other side, since $\rho_i\geq 1$, we have $B_S\subset K^0$ where $B_S$ is the unit ball in $S$. Hence,
\begin{equation}
h_{K^0}(v)\geq h_{B_S}(v)=\cos \phi.
\end{equation}

Assume now, $v\in \Omega_2$. By the definition of support function and that $B_S\subset K^t$,
\begin{equation}
h_{K^t}(v)\geq h_{B_S}(v)=\cos \phi>t.
\end{equation}
Since $t|v\cdot u|\leq t$ for any $u\in S^{n-1}$, we have
\begin{equation}
h_{K^t}(v)=\max \left\{\max_{i=1, \dots, s}\rho_i|u_i\cdot v|, \max_{i=s+1,\dots, N}t |u_i\cdot v|\right\}=\max_{i=1, \dots, s}\rho_i|u_i\cdot v|=h_{K^0}(v).
\end{equation}

Finally, let us assume $v\in \Omega_3$. By the fact that $\rho_{K^t}\leq R$, it is apparent that $h_{K^t}(v)\leq R$. The fact that $h_{K^0}(v)\geq \cos \phi$ follows from that $B_S\subset K^0$.
\end{proof}

The following lemma solves the optimization problem \eqref{eq variational problem}.
\begin{lemma}
\label{lemma existence of maximizer}
Let $-1<p<0$ and $\mu$ be an even discrete measure on $S^{n-1}$ whose support is not contained in any great subspheres. Then, there exists $K\in \mathcal{K}_e^n$ such that 
\begin{equation}
\Phi(K)=\sup_{Q\in D_\mu} \Phi(Q),
\end{equation}
where $\Phi$ is as defined in \eqref{eq local 1002}.
\end{lemma}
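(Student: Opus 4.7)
The plan is a direct-method variational argument: I extract a maximizing sequence $Q^j \in D_\mu$, pass to a Hausdorff limit $Q^0$, and rule out the possibility that $Q^0$ is degenerate by constructing a competitor with strictly larger $\Phi$, using the hypothesis $-1<p<0$ crucially.

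Since $\Phi$ is invariant under dilations (an immediate check using $\mathcal{E}(\lambda Q) = \mathcal{E}(Q) - n\omega_n \log\lambda$ and $\rho_{\lambda Q}^{-p} = \lambda^{-p} \rho_Q^{-p}$), I normalize a maximizing sequence so that $\max_{u \in S^{n-1}}\rho_{Q^j}(u) = 1$; then $Q^j$ is contained in the unit ball. Blaschke selection yields a subsequential Hausdorff limit $Q^0$, an origin-symmetric compact convex set. Because the maximum radius of each $Q^j \in D_\mu$ is attained at some $\pm u_i \in \supt\mu$, a pigeonhole argument (after a further subsequence) fixes an index $i_0$ with $\rho_{Q^j}(u_{i_0}) = 1$ for all $j$, and hence $\pm u_{i_0} \in Q^0$.

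The crux is to show $Q^0 \in \ke$. Suppose not, so $S = \Span Q^0$ has dimension $1 \leq s < n$. Relabel so that $u_1, \dots, u_s \in S$ (with $u_{i_0}$ among these) and $u_{s+1}, \dots, u_N \notin S$. Since $\supt\mu$ is not contained in any great subsphere, $s < N$ and $\mu_{>s} := \sum_{i>s}\mu(\{u_i\}) > 0$; Hausdorff convergence forces $\rho_{Q^j}(u_i) \to 0$ for $i > s$ and $\rho_{Q^j}(u_i) \to \rho_{Q^0}(u_i) > 0$ for $i \leq s$ (the latter because $Q^0$ is full-dimensional in $S$ with the origin in its relative interior). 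For small $t>0$ and $j$ large enough that $\rho_{Q^j}(u_i) < t/2$ for $i > s$, define the competitor
\begin{equation}
K^t_j = \operatorname{conv}\left(\{\pm \rho_{Q^j}(u_i) u_i : i \leq s\} \cup \{\pm t u_i : i > s\}\right) \in D_\mu,
\end{equation}
which satisfies $K^t_j \supset Q^j$. After a bounded rescaling (justified by $\rho_{Q^j}(u_i) \to \rho_{Q^0}(u_i)>0$ for $i \leq s$) so that Lemma \ref{lemma partition} applies, the partition $S^{n-1} = \Omega_1 \cup \Omega_2 \cup \Omega_3$ yields the following. On $\Omega_2$, $h_{K^t_j}$ agrees with $h_{K^0}$ of the flat body $K^0 = \operatorname{conv}\{\pm\rho_{Q^j}(u_i)u_i : i \leq s\} \subset Q^j$, and since $h_{K^0} \leq h_{Q^j} \leq h_{K^t_j}$ pointwise, this piece contributes zero to $\mathcal{E}(Q^j) - \mathcal{E}(K^t_j)$. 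On $\Omega_1 \cup \Omega_3$, the pointwise bounds of Lemma \ref{lemma partition} together with Lemma \ref{lemma function lower bound} (to control $h_{K^t_j}$ from below on $\Omega_1$), and the fact that this region lies in a band of latitude $O(t)$ around $S^\perp$ whose spherical measure is $O(t^s)$ (due to the $\cos^{s-1}\phi$ factor in generalized polar coordinates about $S$), yield a uniform-in-$j$ estimate
\begin{equation}
0 \leq \mathcal{E}(Q^j) - \mathcal{E}(K^t_j) \leq C\, t^s \log(1/t).
\end{equation}

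For the other term of $\Phi$, since $\rho_{K^t_j}(u_i) \geq t$ for $i > s$, $\rho_{Q^j}(u_i)^{-p} < 2^{-|p|} t^{|p|}$ for those $i$, and the sums $\sum_i \rho_{Q^j}(u_i)^{-p}\mu(\{u_i\})$ are uniformly bounded above by $|\mu|$ and below by $\mu(\{u_{i_0}\})$, one obtains
\begin{equation}
-\frac{1}{p}\left(\log\sum_i \rho_{K^t_j}(u_i)^{-p}\mu(\{u_i\}) - \log\sum_i \rho_{Q^j}(u_i)^{-p}\mu(\{u_i\})\right) \geq c_1\, t^{|p|}
\end{equation}
for some $c_1 > 0$ independent of $j$. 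Combining these two estimates,
\begin{equation}
\Phi(K^t_j) - \Phi(Q^j) \geq c_1 t^{|p|} - \frac{C}{n\omega_n} t^s \log(1/t).
\end{equation}
The condition $-1 < p < 0$ forces $|p| < 1 \leq s$, so the right-hand side is strictly positive for all sufficiently small $t$, uniformly in $j$. Since $K^t_j \in D_\mu$, this contradicts the maximality of $(Q^j)$. Hence $Q^0 \in \ke$, Lemma \ref{lemma convergence} gives $\Phi(Q^j) \to \Phi(Q^0)$, and $K := Q^0$ attains $\sup_{Q \in D_\mu}\Phi(Q)$. The main obstacle is this perturbation estimate: expanding $Q^j$ by $O(t)$ in directions orthogonal to $S$ costs an entropy loss of order $t^s \log(1/t)$ from the smallness of spherical measure near $S^\perp$, while the $\rho^{-p}$ sum gains $O(t^{|p|})$; the restriction $p > -1$ is precisely what produces $|p| < 1 \leq s$ and makes the gain strictly dominate.
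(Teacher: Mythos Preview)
Your proof is correct and follows the paper's strategy closely: normalize a maximizing sequence, pass to a Hausdorff limit $Q^0$, and if $Q^0$ collapses into a proper subspace $S$, lift it by $t$ in the missing directions and show that the gain in the $\rho^{-p}$ sum beats the entropy loss. Two minor remarks. First, you conflate $\dim S$ with the number of support vectors $u_i$ lying in $S$ (the paper keeps these separate as $k$ and $s$); the spherical measure of the band near $S^\perp$ is $O(t^{\dim S})$, not $O(t^s)$, but since $\dim S\ge 1$ your comparison $|p|<1\le \dim S$ is unaffected. Second, where the paper first invokes Lemma~\ref{lemma convergence} to make sense of $\Phi(Q^0)$ for the degenerate limit and then shows $G'(t)>0$ by differentiating, you instead compare $K^t_j$ directly to $Q^j$ and read off the orders $t^{|p|}$ versus $t^{\dim S}\log(1/t)$; this is a mild variant that sidesteps evaluating $\Phi$ on a degenerate set and makes the role of $p>-1$ more transparent.
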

\begin{proof}
Suppose $\supt \mu  =\{\pm u_1, \pm u_2, \dots, \pm u_N\}$ and 
\begin{equation*}
\mu = \sum_{i=1}^N \mu_i \left(\delta_{u_i}+\delta_{-u_i}\right).
\end{equation*}

Suppose $Q^j\subset D_\mu$ is a maximization sequence. Let 
\begin{equation*}
\rho_i^j=\rho_{Q^j}(u^i).
\end{equation*}

Since $\Phi$ is homogeneous of degree 0, we may rescale $Q^j$ and assume $\max_{i}\rho_{i}^j=1$. By Blaschke's selection theorem, after possibly taking a subsequence, we may assume that there exists an origin-symmetric compact convex set $Q^0$ such that $Q^j$ converges to $Q^0$ in Hausdorff metric. Moreover,  
\begin{equation}
\rho_i^0:= \rho_{Q^0}(u_i) = \lim_{j\rightarrow \infty} \rho_i^j.
\end{equation}

By Lemma \ref{lemma convergence}, after possibly taking another subsequence, we may assume 
\begin{equation}
\label{eq local 1014}
\Phi(Q^0)=\lim_{j\rightarrow \infty}\Phi(Q^j)=\sup_{Q\in D_\mu} \Phi(Q).
\end{equation}

It remains to show that $o\in \text{int} Q^0$. 

We argue by contradiction and assume that there exists a proper subspace $S$ of $\mathbb{R}^n$ such that $Q^0\subset S$ and $\Span Q^0=S$. Let $k=\dim S$. Since $S$ is a proper subspace, we have $1\leq k<n$. By relabelling, we may assume there exists $1\leq s <N$ such that 
\begin{equation}
\pm u_1,\dots, \pm u_s \in S,
\end{equation}
and 
\begin{equation}
\pm u_{s+1},\dots, \pm u_N\notin S.
\end{equation}

Utilizing the fact that $\Phi$ is homogeneous of degree 0 again and that $\Span Q^0=S$, we may rescale $Q^0$ so that there exists $R\geq 1$ such that 
\begin{equation}
1\leq \rho_1^0,\dots, \rho_s^0\leq R,
\end{equation}
and
\begin{equation}
\rho_{s+1}^0,\dots, \rho_{N}^0=0.
\end{equation}

For sufficiently small $0<t<1$ such that $\arccos \frac{ct}{R}>\frac{\pi}{2}-\delta_0$ where $c$ and $\delta_0$ come from Lemma \ref{lemma function lower bound}, let 
\begin{equation}
K^t = \text{conv}\{\pm \rho_1 u_1,\dots, \pm \rho_s u_s, \pm tu_{s+1}, \dots, \pm t u_N\}.
\end{equation}
Note that $K_t \in D_\mu$. We are going to reach the desired contradiction by showing that for some $t$, $\Phi(K^t)>\Phi(Q^0)$.

Towards this end, for each $K\in \mathcal{K}_e^n$, write 
\begin{equation}
\mathcal{N}_p(K)=-\frac{1}{p}\log \sum_{u_i\in \supt \mu} \rho_{K}(u_i)^{-p}\mu_i = -\frac{1}{p}\log \left(2\sum_{i=1}^N\rho_{K}(u_i)^{-p}\mu_i\right).
\end{equation}
Let $\Delta_1(t) = \frac{1}{n\omega_n}\mathcal{E}(K^t)-\frac{1}{n\omega_n}\mathcal{E}(Q^0)$ and  $\Delta_2(t)=\mathcal{N}_p(K^t)-\mathcal{N}_p(Q^0)$. Note that 
\begin{equation}
\label{eq local 1013}
\Phi(K^t)-\Phi(Q^0)=\Delta_1(t)+\Delta_2(t).
\end{equation}

By Lemma \ref{lemma partition} and notice that $\Omega_1, \Omega_2, \Omega_3$ is a partition of $S^{n-1}$, 
\begin{equation}
\label{eq local 1009}
\begin{aligned}
n\omega_n\Delta_1(t)&\geq \left[-\int_{\Omega_1} \log tdv +\int_{\Omega_1}\log \cos \phi dv\right]+\left[-\int_{\Omega_3} \log Rdv+\int_{\Omega_3}\log \cos \phi dv\right]\\
&=-k\omega_k(n-k)\omega_{n-k}\log t \int_{\arccos \frac{ct}{R}}^\frac{\pi}{2} \cos^{k-1}\phi \sin^{n-k-1}\phi d\phi \\
&\phantom{we}- k\omega_k(n-k)\omega_{n-k}\log R\int_{\arccos t}^{\arccos \frac{ct}{R}} \cos^{k-1}\phi \sin^{n-k-1}\phi d\phi\\
& \phantom{we}+k\omega_k(n-k)\omega_{n-k}\int_{\arccos t}^{\pi/2}\log(\cos\phi)\cos^{k-1}\phi \sin^{n-k-1}\phi d\phi\\
&\geq -k\omega_k(n-k)\omega_{n-k}\log t \int_{\arccos \frac{ct}{R}}^\frac{\pi}{2} \cos^{k-1}\phi \sin^{n-k-1}\phi d\phi\\
&\phantom{we}- k\omega_k(n-k)\omega_{n-k}\log R\left(\arccos \frac{ct}{R}-\arccos t\right)\\
& \phantom{we}+k\omega_k(n-k)\omega_{n-k}\int_{\arccos t}^{\pi/2}\log(\cos\phi)\cos^{k-1}\phi \sin^{n-k-1}\phi d\phi\\
&=:k\omega_k(n-k)\omega_{n-k}g_1(t).
\end{aligned}
\end{equation}

By the definition of $K^t$ and $\Delta_2(t)$,
\begin{equation}
\label{eq local 1007}
\begin{aligned}
\Delta_2(t)&\geq -\frac{1}{p}\log \left(2\sum_{i=1}^s\left(\rho_i^0\right)^{-p}\mu_i+2\sum_{i=s+1}^N t^{-p}\mu_i\right)+\frac{1}{p}\log \left(2\sum_{i=1}^s\left(\rho_i^0\right)^{-p}\mu_i\right)\\
&=-\frac{1}{p}\log \frac{\sum_{i=1}^s\left(\rho_i^0\right)^{-p}\mu_i+\sum_{i=s+1}^N \mu_i t^{-p}}{\sum_{i=1}^s\left(\rho_i^0\right)^{-p}\mu_i}\\
&= -\frac{1}{p}\log \frac{a+bt^{-p}}{a}\\
&=:g_2(t),
\end{aligned}
\end{equation}
where $a=\sum_{i=1}^s\left(\rho_i^0\right)^{-p}\mu_i>0$ and $b=\sum_{i=s+1}^N \mu_i>0$.

Note that $\lim_{t\rightarrow 0^+}g_1(t)=0$. Indeed, for sufficiently small $t$,
\begin{equation}
\label{eq local 1008}
\begin{aligned}
|g_1(t)|&\leq |\log t|(\frac{\pi}{2}-\arccos\frac{ct}{R})+\log R (\arccos \frac{ct}{R}-\arccos t)+\int_{\arccos t}^\frac{\pi}{2} |\log \cos\phi| d\phi\\
&\leq |\log t| \arcsin \frac{ct}{R}+\log R (\arcsin\frac{ct}{R}-\arcsin t)+\int_{0}^t\frac{1}{\sqrt{1-x^2}}|\log x|dx\\
&\leq |\log t| \arcsin \frac{ct}{R}+\log R (\arcsin\frac{ct}{R}-\arcsin t)+2\int_{0}^t|\log x|dx\\
&\longrightarrow 0,
\end{aligned}
\end{equation}
as $t\rightarrow 0$. Also, it is straightforward to see that $\lim_{t\rightarrow 0^+}g_2(t)=0$.

Let $G(t)=\frac{k\omega_k(n-k)\omega_{n-k}}{n\omega_n}g_1(t)+g_2(t)$. From \eqref{eq local 1009}, \eqref{eq local 1007}, and \eqref{eq local 1008}, we see that 
\begin{equation}
\label{eq local 1012}
\Delta_1(t)+\Delta_2(t)\geq G(t),
\end{equation}
and 
\begin{equation}
\label{eq local 1011}
\lim_{t\rightarrow 0^+}G(t)=0.
\end{equation}

By direct computation, for sufficiently small $t>0$,
\begin{equation}
\begin{aligned}
g_1'(t)=&-\frac{1}{t}\int_{\arccos \frac{ct}{R}}^\frac{\pi}{2} \cos^{k-1}\phi\sin^{n-k-1}\phi d\phi\\
&\phantom{we}-\log t \cos^{k-1}\left(\arccos \frac{ct}{R}\right)\sin^{n-k-1}\left(\arccos \frac{ct}{R}\right)\frac{1}{\sqrt{1-\left(\frac{ct}{R}\right)^2}}\frac{c}{R}\\
&\phantom{we}-\log R \left(-\frac{1}{\sqrt{1-\left(\frac{ct}{R}\right)^2}}\frac{c}{R}+\frac{1}{\sqrt{1-t^2}}\right)\\
&\phantom{we} +\log\left(\cos(\arccos t)\right)\cos^{k-1}(\arccos t)\sin^{n-k-1}(\arccos t)\frac{1}{\sqrt{1-t^2}}\\
&\geq -\frac{\arcsin \frac{ct}{R}}{t}-\log R \left(-\frac{1}{\sqrt{1-\left(\frac{ct}{R}\right)^2}}\frac{c}{R}+\frac{1}{\sqrt{1-t^2}}\right)\\
&\phantom{we} + \log t\left[ -\cos^{k-1}\left(\arccos \frac{ct}{R}\right)\sin^{n-k-1}\left(\arccos \frac{ct}{R}\right)\frac{1}{\sqrt{1-\left(\frac{ct}{R}\right)^2}}\frac{c}{R}\right.\\
&\phantom{wwrewrew}+\cos^{k-1}(\arccos t)\sin^{n-k-1}(\arccos t)\frac{1}{\sqrt{1-t^2}}\Bigg]\\
&=: C(t)+\log t\cdot D(t),
\end{aligned}
\end{equation}
where $C(t)$ and $D(t)$ are bounded terms when $t>0$ is sufficiently small.

On the other side, for $t>0$ sufficiently small,
\begin{equation}
g_2'(t)=\frac{b}{a+t^{-p}b}t^{-p-1}\geq \frac{b}{2a}t^{-p-1}.
\end{equation}

Hence, 
\begin{equation}
\label{eq local 1010}
G'(t)\geq \frac{k\omega_k(n-k)\omega_{n-k}}{n\omega_n}C(t)+\frac{k\omega_k(n-k)\omega_{n-k}}{n\omega_n}D(t)\log t+\frac{b}{2a}t^{-p-1}.
\end{equation}
Since $-1<p<0$, when $t>0$ is sufficiently small, the right side of \eqref{eq local 1010} is positive. Hence, there exists $\delta_1>0$ such that for each $t\in (0,\delta_1)$
\begin{equation}
G'(t)>0.
\end{equation}
This, combined with \eqref{eq local 1011}, implies that there exists $t_0>0$ such that 
\begin{equation}
G(t_0)>0.
\end{equation}
By \eqref{eq local 1012}, this implies that
\begin{equation}
\Delta_1(t_0)+\Delta_2(t_0)>0.
\end{equation}
By \eqref{eq local 1013},
\begin{equation}
\Phi(K^{t_0})>\Phi(Q^0).
\end{equation}
But, this is a contradiction to \eqref{eq local 1014}.
\end{proof}

Lemmas \ref{lemma optimization problem} and \ref{lemma existence of maximizer} immediate imply:
\begin{theorem}
Suppose $p\in (-1,0)$ and $\mu$ is a non-zero, even, discrete, finite, Borel measure on $S^{n-1}$. There exists an origin-symmetric polytope $K\in \mathcal{K}_e^n$ such that $\mu=J_p(K,\cdot)$ if and only if $\mu$ is not concentrated entirely on any great subspheres.
\end{theorem}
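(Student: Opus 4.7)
The plan is to prove the two directions of the biconditional separately, leveraging the variational framework already in place.

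For the necessity direction, suppose $K\in\mathcal{K}_e^n$ is an origin-symmetric polytope with $\mu=J_p(K,\cdot)$. Because $K$ is a polytope, the classical Aleksandrov integral curvature $J(K,\cdot)$ is a discrete measure supported on the radial directions of the vertices of $K$: for a polytope the radial Gauss image $\boldsymbol{\alpha}_K(u)$ has positive $\mathcal{H}^{n-1}$-measure only when $u$ is the radial direction of a vertex. The integral representation $dJ_p(K,\cdot)=\rho_K^p\,dJ(K,\cdot)$ shows that $\supt\mu$ coincides with this finite set. Since $K$ has nonempty interior, its vertices affinely span $\mathbb{R}^n$, so no hyperplane through the origin contains all vertex directions, and hence $\mu$ cannot be concentrated on any great subsphere.

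For sufficiency, the strategy is to realize $\mu$ as the $L_p$ Aleksandrov curvature of a maximizer (up to dilation) of the variational problem set up in Lemma \ref{lemma optimization problem}. This reduces the task to showing that $\Phi$ attains its supremum on $D_\mu$, which is precisely Lemma \ref{lemma existence of maximizer}. Combining the two, one obtains $K\in D_\mu$ with $\Phi(K)=\sup_{Q\in D_\mu}\Phi(Q)$ and a constant $c>0$ such that $\mu=J_p(cK,\cdot)$; since $K\in D_\mu$ is an origin-symmetric polytope with vertices in $\supt\mu$, so is $cK$, which is the required body.

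The principal obstacle is therefore Lemma \ref{lemma existence of maximizer}, specifically the prevention of dimension collapse of a maximizing sequence. My approach would follow the perturbation argument already laid out: normalize a maximizing sequence $Q^j\in D_\mu$ so that $\max_u\rho_{Q^j}(u)=1$, extract a Hausdorff limit $Q^0$ via Blaschke selection and use Lemma \ref{lemma convergence} to conclude $\Phi(Q^0)=\sup_{Q\in D_\mu}\Phi(Q)$. If $Q^0$ were supported in a proper subspace $S$ of dimension $k<n$, I would build competitors $K^t\in D_\mu$ by appending small vertices $\pm tu_i$ for those $u_i\notin S$ and compare $\Phi(K^t)$ with $\Phi(Q^0)$ via the three-region partition of $S^{n-1}$ in Lemma \ref{lemma partition}. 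The entropy change $\Delta_1(t)$ is controlled by $|\log t|$ times the $\mathcal{H}^{n-1}$-measure of a spherical cap of thickness $\sim t$ around $S^\perp$, contributing $t^{n-k}|\log t|\to 0$, whereas the singular term $\Delta_2(t)=-(1/p)\log(1+Ct^{-p})$ has derivative of order $t^{-p-1}$. The hard part — and the step that forces the restriction $p>-1$ — is verifying that the $t^{-p-1}$ blowup dominates the entropy contribution near $0$: since $-1<p<0$ gives $-p-1\in(-1,0)$, one obtains $G'(t)>0$ on a small interval, hence $G(t_0)>0$ for some $t_0$, yielding $\Phi(K^{t_0})>\Phi(Q^0)$ and the desired contradiction.
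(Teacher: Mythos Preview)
Your proposal is correct and follows essentially the same route as the paper: necessity is handled by noting that the support of $J_p(K,\cdot)$ is the set of vertex directions of the polytope $K$, which must span $\mathbb{R}^n$, while sufficiency is exactly the combination of Lemmas~\ref{lemma optimization problem} and~\ref{lemma existence of maximizer}, and your sketch of the collapse-prevention argument (competitors $K^t$, the partition of Lemma~\ref{lemma partition}, and the derivative comparison $g_2'(t)\sim t^{-p-1}$ versus $g_1'(t)=O(|\log t|)$ forcing $G'(t)>0$) matches the paper's proof. One minor slip: the cap $\Omega_1$ around $S^\perp\cap S^{n-1}$ has $\mathcal{H}^{n-1}$-measure of order $t^{k}$, not $t^{n-k}$ (the thin directions are the $k$ directions in $S$), but this does not affect the argument since only $\Delta_1(t)\to 0$ and the boundedness of $g_1'(t)/|\log t|$ are needed.
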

\begin{proof}
The only if part is obvious while the if part follows from Lemmas \ref{lemma optimization problem} and \ref{lemma existence of maximizer}.
\end{proof}
\bibliography{mybib}{} 
\bibliographystyle{plain} \end{document}